\documentclass{article}
\usepackage{graphicx} 
\usepackage[a4paper, margin=2.5cm]{geometry}
\usepackage{amsthm} 
\usepackage{amssymb}
\newtheorem{theorem}{Theorem}
\newtheorem{corollary}{Corollary}
\newtheorem{lemma}{Lemma}

\newtheorem{proposition}{Proposition}
\newtheorem{openproblem}{Open Problem}
\newtheorem{definition}{Definition}

\usepackage{amsmath}
\usepackage{float}
\usepackage{subfigure}
\usepackage{tikz}
\usepackage{titlesec}
\usepackage{url}
\titlelabel{\thetitle.\quad}
\usepackage{comment}

\title{On Generalized von Neumann Inverse Graphs of Finite Commutative Regular Rings}
\author{Felicia Servina Djuang$^{1}$, Indah Emilia Wijayanti$^{2}$, and Yeni Susanti$^{3}$\\
{\small $^{1,2,3}$Department of Mathematics, Universitas Gadjah Mada, Yogyakarta, Indonesia}\\
\small{$^{1}$feliciadjuang25@mail.ugm.ac.id, $^{2}$ind\_wijayanti@ugm.ac.id, $^{3}$yeni\_math@ugm.ac.id}}
\date{}

\begin{document}

\maketitle
\begin{abstract}
Let $R$ be a ring with identity. The generalized von Neumann inverse graph of $R$, denoted by $\Gamma_{Reg}(R)$, is defined as the graph whose vertex set is $Reg(R)$, where two distinct vertices $a,b \in R$ are adjacent if and only if $aba=a$ or $bab=b$. In this work, we consider the reduced graph $\Gamma'_{Reg}(R)$ obtained by restricting the vertex set to $Reg(R)\setminus{0_R}$, so that $\Gamma_{Reg}(R) \cong K_1 + \Gamma'_{Reg}(R)$, allowing the analysis to focus on its nontrivial structure. We investigate the structure of $\Gamma'_{Reg}(R)$ for finite commutative von Neumann regular rings and establish several results describing its graph-theoretic properties in relation to the algebraic structure of $R$. In particular, we derive conditions that characterize connectivity, acyclicity, and planarity, and examine structural features such as vertex degrees, girth, and the existence of pendant vertices, along with their algebraic implications. We also identify circumstances under which $\Gamma'_{Reg}(R)$ exhibits specific graph classes, including paths, cycles, and wheels, as well as the presence of certain induced subgraphs. Furthermore, an explicit algorithm is provided to construct $\Gamma'_{Reg}(R)$, and connections with the inclusion ideal graph of $R$ are discussed, offering additional insight into the interplay between ring-theoretic properties and graph structures.\\
\textbf{Keywords:} von Neumann inverse graph, commutative regular ring, von Neumann element, graph invariant, inclusion ideal graph. \\
\textbf{2020 Mathematics Subject Classification:} {05C25, 05C75, 05C76, 05C85, 16E50}
\end{abstract}

\section{Introduction}
Graph theory has developed into a central area of discrete mathematics, providing a flexible language for describing relationships between objects and a wide range of tools for analyzing their structure. In recent years, this perspective has been extended beyond purely combinatorial settings to incorporate algebraic ideas, giving rise to algebraic graphs. In these constructions, graphs are associated with algebraic systems such as groups, rings, or modules, allowing algebraic properties to be examined through combinatorial representations. This connection has proven to be effective, as structural features of algebraic objects can often be translated into graph-theoretic terms, making them easier to interpret and analyze.

A brief review of ring theory is helpful for establishing the context. Let $R$ be a ring with identity. An element $a \in R$ is called \textbf{idempotent} if $a^2=a$, an element $u \in R$ is called a \textbf{unit} if there exists $v \in R$ such that $uv=vu=1$, and an element $x \in R$ is called \textbf{von Neumann regular} if there exists $y \in R$ such that $xyx=x$. A ring $R$ is said to be a \textbf{regular ring} if every element of $R$ is von Neumann regular. The properties of regular rings and von Neumann regular elements have been extensively studied in the literature, as discussed in \cite{goodearl, andersonbadawi, alahmadi}. In this context, the smallest ideal of $R$ containing an element $a \in R$ will be denoted by $[a]$. Moreover, the general structure of finite commutative von Neumann regular rings can be described using classical results found in \cite{wisbauer, herstein, dummit, knapp}.

Algebraic graphs were first introduced through Cayley graphs \cite{cayley}, which are defined on groups. Given a group $G$ and a subset $S \subseteq G$ that does not contain the identity and is closed under taking inverses, the Cayley graph $\mathrm{Cay}(G,S)$ is a graph whose vertex set is $G$, where two vertices $x$ and $y$ are adjacent if and only if $x^{-1}y \in S$ (equivalently, $y = xs$ for some $s \in S$). Here, the set $S$ is commonly referred to as the connection set, and it determines the adjacency structure of the graph. This construction has inspired various generalizations, including several new graph definitions on groups \cite{vasantha}. 

Motivated by this idea, analogous constructions have been extended to rings. One such construction defines a Cayley-type graph on a ring $R$, where two vertices $x$ and $y$ are adjacent if and only if $x - y \in S$, depending on a chosen connection set $S \subseteq R$. When the connection set is taken to be the set of all units in $R$, the resulting graph is known as the unitary Cayley graph \cite{dejter}. A further variation replaces subtraction with addition, where two vertices $x$ and $y$ are adjacent if and only if $x + y \in S$, which is commonly referred to as the addition Cayley graph. If $S$ is the set of all units of the ring, the graph is called the unitary addition Cayley graph \cite{sinha}. When the connection set consists of all von Neumann regular elements of the ring, the graph is known as the von Neumann regular graph \cite{kharkongor}. Moreover, if the connection set is the set of all unit regular elements, the graph is referred to as the unit regular graph \cite{susanti}.

On the other hand, the construction of graphs on rings was initiated by Beck through the study of colorings of zero-divisor graphs \cite{beck}. In Beck's definition, the vertices are all elements of the ring, and two distinct vertices $x$ and $y$ are adjacent if $xy=0$. This definition was later refined by Anderson \cite{anderson} by restricting the vertex set to the set of all zero-divisors of the ring. In addition, the idempotent graph was introduced in \cite{akbari}, where the vertex set consists of all idempotent elements of the ring, and two distinct vertices are adjacent under a similar zero-product condition. 

Further developments include the introduction of the clean graph of ring $R$, denoted by $Cl(R)$ \cite{habibiyet}, whose vertex set consists of ordered pairs of idempotent and unit elements of $R$. Two distinct vertices $(e,u)$ and $(f,v)$ are adjacent if $ef=fe=0$ or $uv=vu=1$. A related graph, denoted by $Cl_2(R)$, is obtained by restricting the vertex set to pairs of nonzero idempotents and units. These constructions are closely related to the notion of clean rings, where every element can be expressed as the sum of an idempotent and a unit. The adjacency condition combines the zero-divisor relation on idempotents with the inverse relation on units. Several numerical parameters of clean graphs of commutative rings have been studied in \cite{singhpatekar}, and various isomorphism results, as well as structural descriptions for clean graphs of certain matrix rings, have been established in \cite{djuang3}. In fact, the general structure of clean graphs has been shown to depend only on the structure of the corresponding idempotent graph \cite{djuang1}, where a $(t,n)$-shuriken operation (with $n-t$ even) is introduced. More precisely, the graph $Cl_2(R)$ is isomorphic to the graph obtained by applying the $(|U'(R)|,|U(R)|)$-shuriken operation to the idempotent graph of $R$, where $U(R)$ denotes the set of all units of $R$ and $U'(R)$ denotes the set of all self-inverse units in $R$. This reduction is possible because the inverse relation among units remains relatively simple.

Beyond clean rings, there is also the notion of $r$-clean rings, in which each element can be written as the sum of an idempotent and a von Neumann regular element \cite{ashrafi}. Motivated by the construction of clean graphs, one can define a related algebraic graph whose vertices consist of pairs of idempotent elements and von Neumann regular elements, with adjacency defined by combining the zero-divisor relation on idempotents and inverse-type relations on regular elements. In this context, it is important to distinguish between two types of inverses: an element $a$ is called an inner inverse of $b$ if $bab=b$, and an outer inverse if $aba=a$ \cite{andersonbadawi}. These inverses need not be unique. To better understand such inverse relations among von Neumann regular elements, a graph-theoretic approach can be employed. 

Accordingly, the generalized von Neumann inverse graph of a ring $R$, denoted by $\Gamma_{Reg}(R)$, is defined as the graph whose vertex set is the set of all von Neumann regular elements of $R$, denoted by $Reg(R)$, where two distinct vertices $a$ and $b$ are adjacent if $aba=a$ or $bab=b$. In this paper, we study the generalized von Neumann inverse graph of finite commutative regular rings.

In addition to the algebraic graphs mentioned above, there is also the inclusion ideal graph of a ring $R$, denoted by $In(R)$. This graph has as its vertex set all nontrivial ideals of $R$, and two distinct vertices $I$ and $J$ are adjacent if $I \subset J$ or $J \subset I$ \cite{akbari2014, akbari2015}. The inclusion ideal graph captures the hierarchical structure of ideals in a ring through inclusion relations. Moreover, this graph is closely related, from a structural point of view, to the generalized von Neumann inverse graph of finite commutative regular rings, providing further insight into the interplay between ideal structure and graph representations.

Recent work has shown that algebraic graphs can play a meaningful role in coding theory. In \cite{fish}, linear codes were constructed from the incidence matrix of the line graph of the Hamming graph, demonstrating how combinatorial structures can be used to generate codes with desirable properties. Building on this idea, \cite{jain} developed linear codes using the incidence matrix of the unit graph over $\mathbb{Z}_n$, where the construction depends on the number of prime factors of $n$. 

In this direction, the generalized von Neumann inverse graph, beyond its role in clarifying and illus\-trating the properties of von Neumann regular elements in a ring, is also expected to provide a new framework for constructing linear codes. Its structure, which reflects inverse-type relations among regular elements, offers potential for further exploration in coding theory.

Before proceeding, we recall several basic graph-theoretic concepts that are used throughout this paper. Let $G=(V(G),E(G))$ be a graph, where $V(G)$ and $E(G)$ denote the vertex set and the edge set of $G$, respectively. A graph $G$ is said to be \textbf{connected} if there exists a path between every pair of distinct vertices. A \textbf{path graph}, denoted by $P_n$, is a graph that is isomorphic to a graph with vertex set $\{v_1,v_2,\dots,v_n\}$ and edge set $\{v_iv_{i+1} \mid 1 \leq i \leq n-1\}$. A \textbf{cycle graph}, denoted by $C_n$ for $n \geq 3$, is a graph that is isomorphic to a graph with vertex set $\{v_1,v_2,\dots,v_n\}$ and edge set $\{v_iv_{i+1} \mid 1 \leq i \leq n-1\} \cup \{v_nv_1\}$. A \textbf{wheel graph}, denoted by $W_n$, is obtained from a cycle graph $C_{n}$ by adding a new vertex that is adjacent to every vertex of the cycle. A \textbf{tree} is a connected graph that contains no cycles; equivalently, a graph $G$ is a tree if it is connected and has exactly $|V(G)|-1$ edges. 

In addition, several specific graphs and graph operations will be introduced and used in this study. One of them is the house graph, which can be described as a graph obtained from a cycle graph $C_4$ by adding an additional vertex that is adjacent to two consecutive vertices of the cycle, forming a shape that resembles a simple house. This graph can be illustrated as follows:
    \begin{figure}[H]
	\begin{center} 
		\resizebox{0.25\textwidth}{!}{\begin{tikzpicture}
		[scale=1,auto=center,every node/.style={circle, fill=blue!45}] 
		\node (v5) at (0,0)  {\textit{}};
		\node (v4) at (-2,-1)  {\textit{}};
		\node (v3) at (2,-1)  {\textit{}};
        \node (v2) at (2,-3)  {\textit{}};
        \node (v1) at (-2,-3)  {\textit{}};
		\draw (v1) --  (v2);
		\draw (v1) -- (v4);
		\draw (v2) -- (v3);
        \draw (v3) -- (v4);
        \draw (v3) -- (v5);
        \draw (v4) -- (v5);
		\end{tikzpicture}}
		\caption{House Graph} \label{gambargrafrumah}
	\end{center}
\end{figure}

Let $G$ and $H$ be two graphs such that $V(G) \cap V(H)= \emptyset$. The \textbf{disjoint union} of $G$ and $H$, denoted by $G \cup H$, is defined as the graph with vertex set $V(G) \cup V(H)$ and edge set $E(G) \cup E(H)$. The \textbf{join} of $G$ and $H$, denoted by $G + H$, is the graph with vertex set $V(G) \cup V(H)$ and edge set $E(G) \cup E(H) \cup \{xy \mid x \in V(G), y \in V(H)\}$, where each vertex of $G$ is connected to every vertex of $H$. Let $A \subseteq V(G)$. The graph obtained by deleting the vertices in $A$ from $G$ is denoted by $G - A$, and is defined by
\begin{align*}
    V(G - A) &= V(G) \setminus A \\ 
    E(G - A) &= E(G) \setminus \{ xy \mid x \in A,\ y \in V(G),\ xy \in E(G) \}.
\end{align*}
Furthermore, the canonical double cover of a graph was introduced in \cite{hujdurovic}. The \textbf{canonical double cover} of a graph $G$, denoted by $B(G)$, is defined as the graph with vertex set $V(G) \times \mathbb{Z}_2$ and edge set $\{(x,0)(y,1) \mid xy \in E(G)\}$. In addition, \cite{barragan} introduced the notion of subgraph-amalgamation. Here, we present a more explicit formulation adapted to the needs of this study to ensure that the resulting graph is uniquely determined. Let $\{G_i \mid i \in I\}$ be a family of graphs. Suppose that there exists a subset $J \subseteq V(G_i)$ for each $i \in I$ such that $V(G_i) \cap V(G_j)=J$ for all $i \neq j$, and the induced subgraphs $\langle J \rangle_{G_i}$ are equal for all $i \in I$. The \textbf{subgraph amalgamation} of $\{G_i \mid i \in I\}$ over $J$, or the $J$-amalgamation of $\{G_i \mid i \in I\}$, denoted by $\bigsqcup(G_i \mid J)$, is defined as the graph with vertex-set
$$
V\left(\bigsqcup(G_i \mid J)\right)=\bigcup_{i \in I}(V(G_i)\setminus J) \cup J
$$
and edge-set
$$
E\left(\bigsqcup(G_i \mid J)\right)=\bigcup_{i \in I}(E(G_i)\setminus E(J)) \cup E(J),
$$
where $E(J)$ denotes the edge set of the induced subgraph of $G_i$ for some $i \in I$ on the vertex set $J$.

Moreover, we introduce a new operation that combines the idea of amalgamation \cite{simanjuntak, barragan} with the canonical double cover construction \cite{hujdurovic}, which we call the amalgamation canonical double cover. Let $G=(V(G), E(G))$ be a graph, $J \subseteq V(G)$, and $A \subseteq V(G-J)$. The \textbf{amalgamation canonical double cover} $A$ over $J$ of a graph $G$, denoted by $AmB_A(G \mid J)$, is the graph with $V(AmB_A(G)\mid J)=V(B(G-J)) \cup J$ and
    \begin{align*}
        E(AmB_A(G \mid J))&=E(B(G-J)) \cup \{(v,0)(v,1) \mid v \in A\} \cup E(\langle J \rangle_G) \\
    &\qquad \cup\{u(v,0),u(v,1) \mid u \in J, v \in V(G-J), uv \in E(G)\}.
    \end{align*}
This operation will be used to simplify and clarify the structure of the graphs studied in this paper. To illustrate this construction, consider the graph $C_6$ as shown in Figure \ref{fig:AmB_C6}. Let $J=\{a_1,a_2,a_3\} \subseteq V(C_6)$ and $A=\{b,d\} \subseteq V(C_6-J)$. The following figure presents the resulting graph obtained from the amalgamation canonical double cover.

\begin{figure}[H]
\centering
\resizebox{0.5\textwidth}{!}{
\begin{tikzpicture}[scale=1,auto=center,roundnode/.style={circle,fill=blue!40}]
\node[roundnode] (a1) at (-2,2) {$a_1$};
\node[roundnode] (a2) at (0,2) {$a_2$};
\node[roundnode] (a3) at (2,2) {$a_3$};
\node[roundnode] (a4) at (-2,0) {$b$};
\node[roundnode] (a5) at (0,0) {$c$};
\node[roundnode] (a6) at (2,0) {$d$};

\draw (a1)--(a2)--(a3)--(a6)--(a5)--(a4)--(a1);
\end{tikzpicture}\space\space\space\space\space
\begin{tikzpicture}[scale=1,auto=center,roundnode/.style={circle,fill=blue!40}]
\node[roundnode,label=below:{$(b,0)$}] (a1) at (-2,2) {};
\node[roundnode,label=below:{$(c,0)$}] (a2) at (0,2) {};
\node[roundnode,label=below:{$(d,0)$}] (a3) at (2,2) {};

\node[roundnode,label=below:{$(b,1)$}] (b1) at (-2,0) {};
\node[roundnode,label=below:{$(c,1)$}] (b2) at (0,0) {};
\node[roundnode,label=below:{$(d,1)$}] (b3) at (2,0) {};

\node[roundnode,label=below:{$a_1$}] (a4) at (-4,3) {};
\node[roundnode,label=below:{$a_2$}] (a5) at (0,3) {};
\node[roundnode,label=below:{$a_3$}] (a6) at (4,3) {};

\draw (a1)--(b2);
\draw (a2)--(b1);
\draw (a2)--(b3);
\draw (a3)--(b2);
\draw (a4)--(a1);
\draw (a4)--(b1);
\draw (a6)--(a3);
\draw (a6)--(b3);
\draw (a4)--(a5);
\draw (a5)--(a6);
\draw (a1)--(b1);
\draw (a3)--(b3);
\end{tikzpicture}
}
\caption{Graph $C_6$ and Graph $AmB_A(C_6 \mid J)$}\label{fig:AmB_C6}
\end{figure}

A graph is said to be \textbf{planar} if it can be drawn in the plane in such a way that no two edges intersect except at their common endpoints. Such a representation is called a plane drawing of the graph. In other words, a graph is planar if it admits an embedding in the plane where edges meet only at vertices to which they are incident. The examples of planar graphs include trees, cycle graphs, and wheel graphs, although not every graph is planar. A deeper understanding of graph-theoretic concepts and their properties can be found in standard references such as \cite{diestel, wilson}.

In this paper, we study the generalized von Neumann inverse graph over finite commutative von Neumann regular rings, with particular attention to its reduced form $\Gamma'_{Reg}(R)$. The discussion focuses on revealing how the structure of the ring influences the graph, especially in terms of connectivity, acyclicity, and planarity. We also examine several structural properties, including vertex degrees, girth, and the presence of pendant vertices, and relate these features to the underlying algebraic characteristics of $R$. In addition, we identify conditions under which the graph takes the form of familiar classes such as paths, cycles, and wheels, and determine when it contains an induced subgraph that is isomorphic to the house graph. The main contribution of this paper is the development of an explicit algorithm that determines the structure of $\Gamma'_{Reg}(R)$ for finite commutative von Neumann regular rings in a general setting. Furthermore, we establish a structural relationship between $\Gamma'_{Reg}(R)$ and the inclusion ideal graph of $R$, which provides a deeper understanding of how ideal-theoretic properties are reflected in the graph. This algorithmic approach not only simplifies the analysis but also serves as a unifying framework for describing the graph in terms of the underlying ring structure.

\section{Result and Discussion}
\begin{definition}
    Let $R$ be a von Neumann regular ring. The generalized von Neumann inverse graph of $R$, denoted by $\Gamma_{Reg}(R)$, where the vertex set is $V(\Gamma_{Reg}(R))=Reg(R)=R$, and two distinct vertices $a,b \in R$ are adjacent if and only if $aba=a$ or $bab=b$.
\end{definition}
Let $a$ and $b$ be elements of a von Neumann regular ring. The element $a$ is said to be regularized by $b$ if $aba=a$, or $b$ is the inner inverse of $a$. Moreover, $a$ is said to regularize $b$ if $bab=b$. In this context, given a von Neumann regular ring $R$, the graph $\Gamma_{Reg}(R)$ represents the relationships among the elements of $R$ that regularize another element or are regularized by one another. In this discussion, we restrict our attention to finite von Neumann regular rings.


We construct the induced subgraph $\Gamma'_{Reg}(R) \leq \Gamma_{Reg}(R)$ on the vertex set $R \setminus \{0_R\}$. Consequently, the structure of $\Gamma_{Reg}(R)$ is obtained as described in the following theorem.

\begin{theorem}\label{teo_strukgrafvn}
    Let $R$ be a ring. Then $\Gamma_{Reg}(R) \cong K_1 + \Gamma'_{Reg}(R)$. 
\end{theorem}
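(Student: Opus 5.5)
The plan is to show that $0_R$ is a dominating vertex of $\Gamma_{Reg}(R)$ and then identify the rest of the graph with $\Gamma'_{Reg}(R)$. Here $R$ is a (finite, von Neumann regular) ring with identity, as in the definition, so that $Reg(R)=R$ and the vertex set is all of $R$.

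\textbf{Step 1: $0_R$ is adjacent to every other vertex.} Take any vertex $b \neq 0_R$. Then
$$0_R\, b\, 0_R = 0_R,$$
so the condition $aba=a$ holds with $a=0_R$. Hence $0_R$ is adjacent to $b$. This uses only that $0$ annihilates everything, so it holds in any ring, commutative or not.

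\textbf{Step 2: the remaining edges.} Deleting $0_R$ leaves the subgraph $\Gamma_{Reg}(R)-\{0_R\}$. Its vertex set is $R\setminus\{0_R\}$, and two vertices are adjacent in it exactly when they are adjacent in $\Gamma_{Reg}(R)$. This is by definition the induced subgraph $\Gamma'_{Reg}(R)$.

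\textbf{Step 3: the isomorphism.} Define a bijection from $V(K_1+\Gamma'_{Reg}(R))$ to $V(\Gamma_{Reg}(R))$. It sends the unique vertex of $K_1$ to $0_R$ and each vertex of $\Gamma'_{Reg}(R)$ to itself. Edges are preserved in both directions:
\begin{itemize}
\item edges inside $\Gamma'_{Reg}(R)$ correspond to themselves by Step 2;
\item the join edges from the $K_1$ vertex to all of $R\setminus\{0_R\}$ correspond to the edges at $0_R$ by Step 1;
\item $K_1$ has no internal edges, and no loop at $0_R$ arises because adjacency is defined only between distinct vertices.
\end{itemize}

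There is no real obstacle here. The only point needing care is that the vertex set must be $Reg(R)$; for a general ring it is still true that $0\in Reg(R)$, since $0\cdot 0\cdot 0=0$. So the same argument applies verbatim with vertex set $Reg(R)$, giving $\Gamma_{Reg}(R)\cong K_1+\Gamma'_{Reg}(R)$.
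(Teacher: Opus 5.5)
Your proof is correct and follows the paper's argument. Both observe that $0_R\,b\,0_R=0_R$ makes $0_R$ adjacent to every other vertex, and that removing $0_R$ leaves exactly the induced subgraph $\Gamma'_{Reg}(R)$, which gives the join decomposition. Your remark that $0_R\in Reg(R)$ in any ring is a useful clarification: it is what makes the statement, phrased for an arbitrary ring $R$, hold with vertex set $Reg(R)$.
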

\begin{proof}
    Let $R$ be a ring. It is known that $V(\Gamma_{Reg}(R))=R=\{0_R\} \cup (R \setminus \{0_R\})=\{0_R\} \cup V(\Gamma'_{Reg}(R))$. Since $\Gamma'_{Reg}(R)$ is an induced subgraph of $\Gamma_{Reg}(R)$ and for every $a \in R$, we have $0_Ra0_R=0_R$, that is, $0_R$ is regularized by $a$. It follows that $E(\Gamma_{Reg}(R))=E(\langle \{0_R\}\rangle_{\Gamma_{Reg}(R)}) \cup E(\Gamma'_{Reg}(R)) \cup \{0_Ra: a \in R \setminus \{0_R\}\}=E(\langle \{0_R\}\rangle_{\Gamma_{Reg}(R)}) \cup E(\Gamma'_{Reg}(R)) \cup \{0_Ra: a \in V(\Gamma'_{Reg}(R)\}$. Hence, $\Gamma_{Reg}(R) = \langle \{0_R\}\rangle_{\Gamma_{Reg}(R)} + \Gamma'_{Reg}(R) \cong K_1 + \Gamma'_{Reg}(R)$.
\end{proof}

Before proceeding further, it is important to note that the construction of the generalized von Neumann inverse graph depends only on the algebraic structure of the underlying ring up to isomorphism. In other words, rings that are structurally identical give rise to graphs with the same structure. This observation is formalized in the following theorem.

\begin{theorem}
    Let $R_1$ and $R_2$ be rings with $R_1 \cong R_2$. Then it follows that $\Gamma_{Reg}(R_1) \cong \Gamma_{Reg}(R_2)$ and $\Gamma'_{Reg}(R_1) \cong \Gamma'_{Reg}(R_2)$.
\end{theorem}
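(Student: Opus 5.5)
Let $\varphi : R_1 \to R_2$ be a ring isomorphism. The plan is to show that $\varphi$ itself, viewed as a map on vertex sets, is a graph isomorphism $\Gamma_{Reg}(R_1) \to \Gamma_{Reg}(R_2)$. We then obtain the statement for $\Gamma'_{Reg}$ by restricting $\varphi$.

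First we check that $\varphi$ maps $Reg(R_1)$ bijectively onto $Reg(R_2)$. If $x \in Reg(R_1)$ with $xyx = x$, then multiplicativity gives $\varphi(x)\varphi(y)\varphi(x) = \varphi(x)$, so $\varphi(x) \in Reg(R_2)$. Applying the same argument to $\varphi^{-1}$, which is also a ring isomorphism, gives the reverse inclusion. Hence $\varphi|_{Reg(R_1)}$ is a bijection onto $Reg(R_2)$. In the regular setting of the Definition this is simply $\varphi : R_1 \to R_2$.

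Next we check that adjacency is preserved and reflected. Take distinct $a, b \in Reg(R_1)$. Then $\varphi(a) \neq \varphi(b)$ by injectivity. Moreover, $aba = a$ holds if and only if $\varphi(a)\varphi(b)\varphi(a) = \varphi(a)$. The forward direction uses multiplicativity; the backward direction applies $\varphi^{-1}$, or equivalently uses injectivity together with $\varphi(aba) = \varphi(a)\varphi(b)\varphi(a)$. The same holds with $a$ and $b$ swapped. Therefore $a \sim b$ in $\Gamma_{Reg}(R_1)$ exactly when $\varphi(a) \sim \varphi(b)$ in $\Gamma_{Reg}(R_2)$, and $\varphi$ is a graph isomorphism.

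For the reduced graphs, note that $\varphi(0_{R_1}) = 0_{R_2}$ because $\varphi$ is additive. Hence $\varphi$ maps $Reg(R_1) \setminus \{0_{R_1}\}$ bijectively onto $Reg(R_2) \setminus \{0_{R_2}\}$. An isomorphism of graphs restricts to an isomorphism of the induced subgraphs on corresponding vertex sets, so $\Gamma'_{Reg}(R_1) \cong \Gamma'_{Reg}(R_2)$. Alternatively, one could derive this via Theorem~\ref{teo_strukgrafvn}, but cancelling the $K_1$ in a join is less direct, so we use the restriction argument.

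There is no real obstacle here. The only point needing care is that the equivalence of the adjacency conditions runs in both directions, which is where injectivity (or the inverse map) is used, together with the fact that $\varphi$ sends zero to zero.
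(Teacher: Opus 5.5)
Your proposal is correct and follows essentially the same route as the paper: the ring isomorphism itself is the graph isomorphism, since $aba=a \iff \varphi(a)\varphi(b)\varphi(a)=\varphi(a)$. Restricting it via $\varphi(0_{R_1})=0_{R_2}$ then gives the statement for $\Gamma'_{Reg}$. Your version is more careful than the paper's, which only asserts the adjacency equivalence: you explicitly check that $\varphi$ maps $Reg(R_1)$ onto $Reg(R_2)$ and that adjacency is both preserved and reflected.
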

\begin{proof}
    Since $R_1 \cong R_2$, there exists a ring isomorphism $f:R_1 \to R_2$. Moreover, $f$ induced a corresponding map  $f:V(\Gamma_{Reg}(R_1)) \to V(\Gamma_{Reg}(R_2))$. For arbitrary elements $a_1,b_1 \in V(\Gamma_{Reg}(R_1))$. Then $a_1b_1 \in E(\Gamma_{Reg}(R_1))$ if and only if $f(a_1)f(b_1) \in E(\Gamma_{Reg}(R_2))$.
    Hence, $\Gamma_{Reg}(R_1) \cong \Gamma_{Reg}(R_2)$.
    Furthermore, by the property of ring isomorphism, we have $f(0_{R_1})=0_{R_2}$. Consequently, we can define the restriction $f'|_{R_1 \setminus \{0_{R_1}\}}$. In other words, $f':V(\Gamma'_{Reg}(R_1)) \to V(\Gamma_{Reg}(R_2))$ where $f'(x)=f(x)$ for every $x \in V(\Gamma'_{Reg}(R_1)) = R_1 \setminus \{0_{R_1}\}$. For arbitrary elements $a_1,b_1 \in V(\Gamma'_{Reg}(R_1))=R_1 \setminus \{0_{R_1}\}$. Then, $a_1b_1 \in E(\Gamma'_{Reg}(R_1))$ if and only if $f'(a_1)f'(b_1) \in E(\Gamma'_{Reg}(R_2))$.
    Hence, $\Gamma'_{Reg}(R_1) \cong \Gamma'_{Reg}(R_2)$. 
\end{proof}

Based on Theorem \ref{teo_strukgrafvn}, the structure of the graph $\Gamma_{Reg}(R)$ for a ring $R$ depends solely on the graph $\Gamma'_{Reg}(R)$. Therefore, in the following discussion, we will examine the properties and characteristics of the graph $\Gamma'_{Reg}(R)$. 

The structure of finite commutative von Neumann regular rings can be understood through a sequence of fundamental results linking regularity, nilpotent elements, and ideal structure. We begin with the observation that the class of von Neumann regular rings is stable under finite direct products. Indeed, if $F_1, F_2, \dots, F_k$ are fields, then each element in the product $F_1 \times F_2 \times \dots \times F_k$ admits a coordinate-wise 'quasi-inverse', showing that the product itself is von Neumann regular. This provides a basic source of examples and suggests that such products play a central role in the general theory.

To analyze the internal structure of a commutative von Neumann regular ring $R$, one considers its nilradical $Nil(R)$, defined as the set of all nilpotent elements. A key property in this setting is that $R$ contains no nonzero nilpotent elements, that is, $Nil(R)={0}$. The proof relies on the defining condition of regularity: for any $a \in R$, there exists $x \in R$ such that $a=a^2x$. If $a$ is nilpotent, say $a^n=0$, repeated substitution into this relation forces $a$ itself to vanish. This shows that commutative von Neumann regular rings are reduced.

The behavior of prime ideals in such rings is particularly restrictive. If $P$ is a prime ideal of a commutative von Neumann regular ring $R$, then the quotient $R/P$ is not only an integral domain but in fact a field. This follows from the regularity condition, which guarantees that every nonzero element in $R/P$ has a multiplicative inverse. Consequently, every prime ideal is maximal. This observation is crucial, as it allows one to treat the set of prime ideals as a finite collection of maximal ideals in the finite case.

Another important ingredient is the general fact that, for any ring $R$, the nilradical coincides with the intersection of all prime ideals of $R$. When combined with the previous result that $Nil(R)={0}$, it follows that the intersection of all prime ideals in a commutative von Neumann regular ring is trivial. In the finite case, where there are only finitely many prime (hence maximal) ideals $P_1,P_2,\dots,P_n$, we obtain
$P_1 \cap P_2 \cap \dots \cap P_n = {0}$.

Since distinct maximal ideals are comaximal, the Chinese Remainder Theorem applies. It yields an isomorphism
$R \cong R/(P_1 \cap P_2 \cap \dots \cap P_n) \cong R/P_1 \times R/P_2 \times \dots \times R/P_n$.
Each quotient $R/P_i$ is a field, and therefore $R$ decomposes as a finite direct product of fields. This establishes that every finite commutative von Neumann regular ring admits such a decomposition.

Conversely, any finite direct product of fields is itself a commutative von Neumann regular ring, as noted at the outset. Combining these two directions, we obtain a complete characterization: a finite commutative ring is von Neumann regular if and only if it is isomorphic to a finite direct product of fields. This result highlights the rigid algebraic structure of such rings and provides a concrete framework for further analysis. 

In the subsequent discussion, we restrict our attention to finite commutative von Neumann regular rings. As a starting point, it is natural to examine the graph structure in the simplest case, namely when the ring is a field. This case serves as a fundamental building block for understanding more general structures. The following theorem describes the form of $\Gamma'_{Reg}(F)$ for an arbitrary field $F$.

\begin{theorem}\label{teo_grafvn_lap}
    For every field $F$, then $\Gamma'_{Reg}(F) \cong |U'(F)|K_1 \cup \frac{|U''(F)|}{2} K_2$.
\end{theorem}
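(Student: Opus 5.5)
In a field $F$ every nonzero element is a unit, so the vertex set of $\Gamma'_{Reg}(F)$ is $F\setminus\{0_F\}=U(F)$. As in the introduction, $U'(F)$ denotes the self-inverse units; I will read $U''(F)$ as the remaining units, $U(F)\setminus U'(F)$, that is, those $u$ with $u\neq u^{-1}$. The plan has two steps. First I determine the adjacency relation explicitly. Then I read off the connected components.

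\textbf{Step 1: adjacency.} Let $a,b\in F\setminus\{0_F\}$ be distinct. The condition $aba=a$ becomes $ab=1$ after cancelling the unit $a$. Since $F$ is commutative, the condition $bab=b$ becomes $ab=1$ in the same way. Hence $a\sim b$ if and only if $b=a^{-1}$ and $a\neq b$. Because inverses are unique in a field, each vertex $a$ has at most one neighbour, namely $a^{-1}$.

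\textbf{Step 2: components.} If $a\in U'(F)$, then $a^{-1}=a$, and the only candidate neighbour is excluded by distinctness, so $a$ is isolated. These vertices give the $|U'(F)|K_1$ part. If $a\in U''(F)$, then $a^{-1}\neq a$ and $a^{-1}\in U''(F)$, since $(a^{-1})^{-1}=a\neq a^{-1}$. So $a$ and $a^{-1}$ are adjacent, and each has degree exactly one. The map $a\mapsto a^{-1}$ is a fixed-point-free involution on $U''(F)$, so it partitions $U''(F)$ into $|U''(F)|/2$ two-element orbits $\{a,a^{-1}\}$. Each orbit induces a copy of $K_2$, and there are no edges between different orbits or between $U''(F)$ and $U'(F)$. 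Taking the disjoint union gives $\Gamma'_{Reg}(F)\cong |U'(F)|K_1\cup \frac{|U''(F)|}{2}K_2$.

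Most of this is routine. The step that needs care is making sure that $|U''(F)|/2$ is an integer and that no vertex lies in more than one $K_2$. Both follow because inversion is an involution without fixed points on $U''(F)$, so I will state that explicitly. One can also note that $U'(F)=\{1,-1\}$, which has one element in characteristic $2$ and two otherwise, but the statement does not need this.
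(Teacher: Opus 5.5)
Your proof is correct and follows essentially the same route as the paper. Both arguments cancel the unit to show that distinct nonzero $a,b$ are adjacent exactly when $b=a^{-1}$, so self-inverse units are isolated and the remaining units pair off into copies of $K_2$; your observation that inversion is a fixed-point-free involution on $U''(F)$ makes explicit the integrality of $|U''(F)|/2$ and the disjointness of the $K_2$'s, which the paper attributes only to uniqueness of inverses.
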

\begin{proof}
    Let $a,b \in V(\Gamma'_{Reg}(F))=F \setminus \{0_F\}$ be arbitrary elements with $ab \in E(\Gamma'_{Reg}(F))$. This means that $aba=a$ or $bab=b$. It follows that $a=b^{-1}$. Consequently, every nonzero element of $F$ that is self-inverse forms an isolated vertex. We have that $U'(F)$ is the set of unit elements in $F$ which are self-inverse, and $U''(F)$ is the set of unit elements in $F$ whose inverses are different from themselves. Since the inverse of a unit element is unique, it follows that $\langle U'(F) \rangle _{\Gamma'_{Reg}(F)}=|U'(F)|K_1$ and $\langle U''(F) \rangle _{\Gamma'_{Reg}(F)}=\frac{|U''(F)|}{2}K_2$. Hence $\Gamma'_{Reg}(F)= \langle U'(F) \rangle _{\Gamma'_{Reg}(F)} \cup \langle U''(F) \rangle _{\Gamma'_{Reg}(F)}= |U'(F)|K_1 \cup \frac{|U''(F)|}{2} K_2$.
\end{proof}


Having established the structure of $\Gamma'_{Reg}(F)$ for a field, we now extend the discussion to arbitrary finite commutative von Neumann regular rings. By the decomposition $R \cong F_1 \times F_2 \times \dots \times F_n$, where each $F_i$ is a field, the structure of $\Gamma'_{Reg}(R)$ can be analyzed in terms of its components. In particular, this decomposition enables us to determine fundamental graph parameters, such as the degree of vertices, in a systematic way. The next theorem provides a characterization of the vertex degrees of $\Gamma'_{Reg}(R)$ under this representation.

\begin{theorem}\label{teo_deggrafvn'}
    Let $R$ be a finite commutative von Neumann regular ring. Suppose that $R \cong F_1 \times F_2 \times \dots \times F_n$, where $F_1, F_2, \dots, F_n$ are fields. The degree of $(a_1,a_2,\dots,a_n)$, where $a_{i_1}=0_{F_{i_1}}, a_{i_2}=0_{F_{i_2}}, \dots, a_{i_k}=0_{F_{i_k}}$ for $k > 0$ and $1\leq i_1,i_2,\dots,i_k \leq n$, is given by $$\deg_{\Gamma'_{Reg}(R)}((a_1,a_2,\dots,a_n))=\begin{cases}
        2^{n-k}-3&+|F_{i_1}||F_{i_2}|\dots|F_{i_k}|, \\
        &\text{if } a_i^{-1}=a_i \text{ for every } a_i \neq 0_{F_i}\\
        2^{n-k}-2&+|F_{i_1}||F_{i_2}|\dots|F_{i_k}|, \\&\text{otherwise}.
    \end{cases}$$
    Furthermore, if $k=0$, the degree of the vertex $(a_1,a_2,\dots,a_n)$ in the graph $\Gamma'_{Reg}(R)$ is $$\deg_{\Gamma'_{Reg}(R)}((a_1,a_2,\dots,a_n))=\begin{cases}
        2^{n}-2, &\text{if } a_i^{-1}=a_i \text{ for every } a_i \neq 0_{F_i}\\
        2^{n}-1, &\text{otherwise}.
    \end{cases}$$
\end{theorem}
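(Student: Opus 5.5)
The plan is to count the neighbours of a fixed vertex directly, using coordinatewise computation in $F_1 \times F_2 \times \dots \times F_n$. Fix $a=(a_1,\dots,a_n)\neq 0$ and let $Z=\{i_1,\dots,i_k\}$ be its zero coordinates and $S=\{1,\dots,n\}\setminus Z$ its support. Then $S\neq\emptyset$, so $k<n$. Since adjacency is ``$aba=a$ or $bab=b$'', the neighbourhood of $a$ is $(A\cup B)\setminus\{0,a\}$, where
$$A=\{b\in R : aba=a\},\qquad B=\{b\in R : bab=b\}.$$
I will compute $|A|$, $|B|$ and $|A\cap B|$, and then correct for $0$ and for $a$ itself. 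This is essentially the argument of Theorem \ref{teo_grafvn_lap}, carried out coordinate by coordinate.

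\textbf{The sets $A$ and $B$.} The condition $aba=a$ reads $a_i^2b_i=a_i$ for every $i$. For $i\in S$ this forces $b_i=a_i^{-1}$, and for $i\in Z$ it holds for every $b_i$. Hence $|A|=|F_{i_1}||F_{i_2}|\cdots|F_{i_k}|$, which is the empty product $1$ when $k=0$. The condition $bab=b$ reads $b_i(a_ib_i-1)=0$ for every $i$. For $i\in Z$ this forces $b_i=0$, and for $i\in S$ it means $b_i\in\{0,a_i^{-1}\}$. Hence $|B|=2^{n-k}$. An element of $A\cap B$ must equal $a_i^{-1}$ on $S$ and $0$ on $Z$, so $A\cap B$ consists of exactly one element. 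Therefore
$$|A\cup B|=|F_{i_1}|\cdots|F_{i_k}|+2^{n-k}-1.$$

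\textbf{Removing $0$ and $a$.} The zero vector lies in $B$. It does not lie in $A$, because $S\neq\emptyset$ and $a_i^{-1}\neq 0$. Removing it gives $|F_{i_1}|\cdots|F_{i_k}|+2^{n-k}-2$. Next, $a$ belongs to $A\cup B$ if and only if $a_i=a_i^{-1}$ for all $i\in S$. Indeed, if this holds then $a\in A$, since the coordinates in $Z$ are unconstrained. If it fails, then $a\notin A$, and $a\notin B$ either, because $a\in B$ would require $a_i\in\{0,a_i^{-1}\}$ on $S$. So we subtract one more exactly in the case where every nonzero coordinate of $a$ is self-inverse. This gives the two cases $2^{n-k}-3+\prod_j|F_{i_j}|$ and $2^{n-k}-2+\prod_j|F_{i_j}|$.

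For $k=0$ the product is empty, and the formulas specialize to $2^n-2$ and $2^n-1$, which is the second part of the statement. No step is a serious obstacle. The only delicate point is the bookkeeping of the overlaps: the single common element of $A$ and $B$, the membership of $0$ (in $B$ only), and the membership of $a$ (in $A$ exactly in the self-inverse case), so that nothing is subtracted twice.
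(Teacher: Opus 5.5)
Your proposal is correct and uses essentially the same count as the paper. The paper also takes the $\prod_j|F_{i_j}|$ elements that regularize $a$, adds the $2^{n-k}-1$ nonzero elements that $a$ regularizes, subtracts the single common element, and subtracts one more for $a$ itself in the self-inverse case. Your version is slightly tidier: you make the membership of $0$ and of $a$ explicit, and you handle $k=0$ uniformly through the empty product rather than as a separate case.
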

\begin{proof}
    For an arbitrary vertex $(a_1,a_2,\dots,a_n) \in F_1 \times F_2 \times \dots \times F_n$, with $a_{i_1}=0_{F_{i_1}}, a_{i_2}=0_{F_{i_2}}, \dots ,a_{i_k}=0_{F_{i_k}}$ for some $k > 0$ and $1\leq i_1,i_2,\dots,i_k \leq n$. The element $(a_1,a_2,\dots,a_n)$ is regularized by $(b_1,b_2,\dots,b_n)$ where $b_i=a_i^{-1}$ for every $i \in \{1,2,\dots,n\} \setminus \{i_1,i_2,\dots,i_k\}$ and $b_{j} \in F_j$ for every $j \in \{i_1,i_2,\dots,i_k\}$. In addition, the element $(a_1,a_2,\dots,a_n)$ regularizes the elements in the set $$\prod_{i=1}^n B_i \setminus \{(0_{F_1},0_{F_2} \dots,0_{F_n})\} \text{ with }B_i=\begin{cases}
        \{a_i^{-1}, 0_{F_i}\}, &\text{if } i \notin \{i_1,i_2,\dots,i_k\}\\
        \{0_{F_i}\}, &\text{if }i \in \{i_1, i_2, \dots, i_k\}.
    \end{cases}$$
Since $(a_1,a_2,\dots,a_n)$ only regularizes and is regularized at the same time by $(b_1,b_2,\dots,b_n)$, the number of elements that are regularized by and regularize the element $(a_1,a_2,\dots,a_n)$ is $|F_{i_1}||F_{i_2}|\dots|F_{i_k}| + (2^{n-k}-1)-1= 2^{n-k}-2 +|F_{i_1}||F_{i_2}|\dots|F_{i_k}|$. If $a_i^{-1}=a_i$ for every $i=1,2,\dots,n$, then the number of distinct elements that are regularized by and regularize the element $(a_1,a_2,\dots,a_n)$ is $2^{n-k}-2 +|F_{i_1}||F_{i_2}|\dots|F_{i_k}|-1=2^{n-k}-3 +|F_{i_1}||F_{i_2}|\dots|F_{i_k}|$. Next, take an arbitrary point $(a_1,a_2,\dots,a_n) \in F_1 \times F_2 \times \dots F_n$. If $a_i \neq 0_{F_i}$ for every $i=1,2,\dots,n$, then $a_i$ is only regularized by $a_i^{-1}$ and $a_i$ regularizes $0_{F_i}$ in $F_i$. Consequently, $(a_1,a_2,\dots,a_n)$ is only regularized by $(a_1^{-1},a_2^{-1},\dots, a_n^{-1})$. On the other hand, $(a_1,a_2,\dots,a_n)$ regularizes the elements in the set $\prod_{i=1}^n\{a_i^{-1},0_{F_i}\} \setminus \{(0_{F_1},0_{F_2} \dots,0_{F_n})\}$. Hence, the number of elements that are regularized by and regularize the element $(a_1,a_2,\dots,a_n)$ is $2^n-1$. If $a_i^{-1}=a_i$ for every $i=1,2,\dots,n$, then $(a_1,a_2,\dots,a_n)=(a_1^{-1},a_2^{-1},\dots, a_n^{-1})$, so the number of distinct elements that are regularized by and regularize the element $(a_1,a_2,\dots,a_n)$ is $2^n-2$. 
\end{proof}

With the characterization of vertex degrees in hand, we proceed to examine the global structure of $\Gamma'_{Reg}(R)$, in particular, its connectivity. Understanding when this graph is connected provides further insight into how the decomposition of $R$ influences its overall structure. The following theorem gives a necessary and sufficient condition for $\Gamma'_{Reg}(R)$ to be a nontrivial connected graph.

\begin{theorem}\label{teo_grafvnterhubungnontr}
    Let $R \cong F_1 \times F_2 \times \dots \times F_n$ be a ring for some $n \geq 1$, where $F_1, F_2, \dots, F_n$ are fields. The graph $\Gamma'_{Reg}(R)$ is a nontrivial connected graph if and only if $n \geq 2$.
\end{theorem}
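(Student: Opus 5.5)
For the forward direction I argue by contraposition. If $n=1$, then $R\cong F_1$ is a field, and Theorem \ref{teo_grafvn_lap} gives $\Gamma'_{Reg}(F_1)\cong |U'(F_1)|K_1\cup \frac{|U''(F_1)|}{2}K_2$. This graph is either trivial or disconnected. It has at least one vertex, namely $1_{F_1}$, which is self-inverse and hence isolated. If $|F_1|=2$, the graph is the trivial graph $K_1$. If $|F_1|\ge 3$, then $1_{F_1}$ is an isolated vertex among at least two vertices, so the graph is disconnected. In either case $\Gamma'_{Reg}(R)$ is not a nontrivial connected graph. By the isomorphism-invariance theorem proved earlier, this transfers from $F_1$ to $R$.

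For the converse, let $n\ge 2$. Nontriviality is immediate, since $|R\setminus\{0\}|\ge 3$. For connectivity I use the vertex $\mathbf{1}=(1_{F_1},\dots,1_{F_n})$ as a hub and connect every vertex to it by a path of length at most $2$. I will use idempotents $e_S$, the element with $1$ in the coordinates indexed by $S$ and $0$ elsewhere. Take any nonzero $a=(a_1,\dots,a_n)$ and let $S=\{i: a_i\neq 0\}\neq\emptyset$.

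\emph{Step 1.} The vertex $a$ is adjacent to $e_S$ whenever $a\neq e_S$. Indeed, define $b$ by $b_i=a_i^{-1}$ for $i\in S$ and $b_i=0$ otherwise. A coordinatewise check gives $e_S a e_S = a e_S$, which is not generally equal to $a$, so I use $a\,b'\,a=a$ with a suitable $b'$ instead of $e_S$ directly. Concretely, by the proof of Theorem \ref{teo_deggrafvn'}, $a$ regularizes every element of $\prod B_i\setminus\{0\}$, where $B_i=\{a_i^{-1},0\}$ on $S$. In particular $a$ regularizes $b$, and $a$ also regularizes each single-coordinate element $a_i^{-1}\epsilon_i$ for $i\in S$, where $\epsilon_i$ is the $i$-th standard idempotent. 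Moreover, every element $c$ with $c_i=a_i^{-1}$ on $S$ and arbitrary entries off $S$ regularizes $a$.

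\emph{Step 2.} I connect $a$ to $\mathbf{1}$ by splitting into two cases.
If $S\neq\{1,\dots,n\}$, choose $c$ with $c_i=a_i^{-1}$ on $S$ and $c_j=1$ off $S$. Then $aca=a$, so $a\sim c$. Next compare $c$ with $\mathbf{1}$: the product $\mathbf{1}c\mathbf{1}=c$, so $c\sim\mathbf{1}$ whenever $c\neq\mathbf{1}$ (and if $c=\mathbf{1}$, then $a\sim \mathbf{1}$ directly). Note also that $a\neq c$ because they differ off $S$.
If $S=\{1,\dots,n\}$, so that $a$ is a unit, pick an index $i$ and set $d=a_i^{-1}\epsilon_i$. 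Since $n\ge2$, $d\neq a$, and $d\,a\,d=d$, so $a\sim d$. Then $d$ has zero coordinates, and by the previous case $d$ lies within distance $2$ of $\mathbf{1}$.

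Thus every vertex lies in the component of $\mathbf{1}$, and the graph is connected. The main obstacle is bookkeeping of distinctness, ensuring the chosen neighbours are different from the vertex itself, since adjacency is defined only for distinct vertices. The place where this needs care is the unit case, where $n\ge 2$ is exactly what guarantees $d\neq a$.
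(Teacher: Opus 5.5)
Your forward direction is fine, but the converse has a genuine gap at the step ``$\mathbf{1}c\mathbf{1}=c$, so $c\sim\mathbf{1}$ whenever $c\neq\mathbf{1}$.'' That identity holds for every $c$, so it cannot be an adjacency test.

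\begin{itemize}
\item The actual condition for $c\sim\mathbf{1}$ is $\mathbf{1}c\mathbf{1}=\mathbf{1}$ or $c\mathbf{1}c=c$, that is, $c=\mathbf{1}$ or $c^2=c$. So the neighbours of $\mathbf{1}$ are exactly the idempotents other than $0$ and $\mathbf{1}$.
\item Your $c$ is a unit, so it is adjacent to $\mathbf{1}$ only when $c=\mathbf{1}$.
\item Concretely, in $\mathbb{F}_3\times\mathbb{F}_3$ with $a=(2,0)$ you get $c=(2,1)$. Here $(1,1)(2,1)(1,1)=(2,1)\neq(1,1)$ and $(2,1)(1,1)(2,1)=(1,1)\neq(2,1)$, so $c\not\sim(1,1)$.
\item In this ring $\Gamma'_{Reg}(R)\cong C_8$, and $(2,2)$ is at distance $4$ from $(1,1)$. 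So no bound of ``distance at most $2$ (or $3$ for units)'' can hold.
\end{itemize}

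The opening sentence of your Step 1, that $a\sim e_S$ whenever $a\neq e_S$, fails for the same reason. We have $e_Sae_S=a\neq e_S$ and $ae_Sa=a^2\neq a$, so $a$ and $e_S$ are never adjacent when distinct. You do abandon that claim, but your stated reason (``$e_Sae_S=ae_S$, which is not generally equal to $a$'') is also off, since $ae_S=a$ always. In both places the slip is comparing $xyx$ with $y$ rather than with $x$.

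The repair is short: put an idempotent between $c$ and $\mathbf{1}$. For $S\neq\{1,\dots,n\}$, let $f$ be the idempotent with $1$ off $S$ and $0$ on $S$.
\begin{itemize}
\item Since $c_j=1$ off $S$, we get $fcf=f$, so $c\sim f$. Also $f\neq c$, because $c$ is nonzero on $S$.
\item $f\mathbf{1}f=f$ gives $f\sim\mathbf{1}$.
\item $f\neq 0$ because $S\neq\{1,\dots,n\}$, and $f\neq\mathbf{1}$ because $S\neq\emptyset$.
\end{itemize}
This gives the path $a-c-f-\mathbf{1}$. Your unit case then adds the single edge $a-d$ in front. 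With $d=a_i^{-1}\epsilon_i$, the resulting route is exactly the paper's path $a-\mathbf{x}^i-\mathbf{y}^i-\mathbf{e}^i-\mathbf{1}$. The paper simply uses that route for every vertex, without splitting into cases.
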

\begin{proof}
    Let $R \cong F_1 \times F_2 \times \dots \times F_n$ be a ring for some $n \geq 1$, where $F_1, F_2, \dots, F_n$ are fields.
    \begin{enumerate}
        \item[$(\Longleftarrow)$] Let $n \geq 2$ be given. Take arbitrary $(a_1,a_2,\dots,a_n), (b_1,b_2,\dots,b_n) \in V(\Gamma'_{Reg}(R))= (F_1 \times F_2 \times \dots \times F_n) \setminus \{(0_{F_1},0_{F_2},\dots,0_{F_n})\}$. This means that there exist $a_i \neq 0_{F_i}$ and $b_j \neq 0_{F_j}$ for some $1 \leq i,j \leq n$. 
        Suppose that
        \begin{align*}
            \textbf{e}^k&=(e^k_1,e^k_2,\dots,e^k_n) \text{ for every } 1 \leq k \leq n, \text{ where }\\& \qquad e^k_c=\begin{cases}
            1_{F_c}, &\text{if } c \neq k\\
            0_{F_k}, &\text{if } c = k
        \end{cases} \text{ for } 1 \leq c \leq n,\\
        \textbf{x}^i&=(x_1,x_2,\dots,x_n), \text{ where } x_c=\begin{cases}
            0_{F_c}, &\text{if } c \neq i\\
            a_i^{-1}, &\text{if }c =i
        \end{cases} \text{ for } 1 \leq c \leq n,\\
        \textbf{y}^i&=(y_1,y_2,\dots,y_n), \text{ where } y_c=\begin{cases}
            1_{F_c}, &\text{if } c \neq i\\
            a_i, &\text{if }c=i
        \end{cases} \text{ for } 1 \leq c \leq n,\\
        \textbf{u}^j&=(u_1,u_2,\dots,u_n), \text{ where } u_c=\begin{cases}
            0_{F_c}, &\text{if } c \neq j\\
            b_j^{-1}, &\text{if }c =j
        \end{cases} \text{ for } 1 \leq c \leq n,\\
        \textbf{v}^j&=(v_1,v_2,\dots,v_n), \text{ where } v_c=\begin{cases}
            1_{F_c}, &\text{if } c \neq j\\
            b_j, &\text{if }c=j
        \end{cases} \text{ for } 1 \leq c \leq n.
        \end{align*}
        It is observed that a path can be constructed as follows 
        \begin{align*}
            (a_1,a_2,\dots,a_n) - \textbf{x}^i- \textbf{y}^i - \textbf{e}^{i} - (1,1, \dots, 1) - \textbf{e}^{j} -  \textbf{v}^j - \textbf{u}^j - (b_1,b_2,\dots,b_n).
        \end{align*}
        Hence, the graph $\Gamma'_{Reg}(R)$ is connected. Since $|V(\Gamma'_{Reg}(R))| \geq 2^n-1 \geq 3$, the graph $\Gamma'_{Reg}(R)$ is nontrivial.
        \item[$(\Longrightarrow)$] if $n=1$, then $R \cong F_1$. Based on Theorem \ref{teo_grafvn_lap}, we obtain the graph $\Gamma'_{Reg}(R)=|U'(F_1)|K_1 \cup \frac{|U''(F)|}{2}K_2$. If $|R \setminus \{0\}|>1$, then the graph $\Gamma'_{Reg}(R)$ is disconnected. If $|R \setminus \{0\}|=1$, then the graph $\Gamma'_{Reg}(R)$ is the trivial graph $K_1$.
    \end{enumerate}
\end{proof}

Having determined the conditions under which $\Gamma'_{Reg}(R)$ is connected, it is natural to investigate more refined structural features of the graph, particularly those related to its local configuration. One such feature is the existence of pendant vertices, which often reflects specific constraints on the underlying ring. The following theorem characterizes precisely when $\Gamma'_{Reg}(R)$ contains a pendant vertex.

\begin{theorem}\label{teo_grafvn_titikpendant}
    Let $R$ be a ring. The graph $\Gamma'_{Reg}(R)$ has a pendant vertex if and only if $R \cong F_1$ or $R\cong \mathbb{F}_2 \times F$, where $F_1$ and $F$ are fields with $|F_1| \geq 4$.
\end{theorem}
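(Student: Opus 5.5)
The plan is to reduce everything to the product decomposition and then read off degree-one vertices from the earlier results. Since the standing assumption is that $R$ is a finite commutative von Neumann regular ring, the discussion preceding Theorem \ref{teo_grafvn_lap} gives $R \cong F_1 \times F_2 \times \dots \times F_n$ with each $F_i$ a field. By the isomorphism-invariance theorem, $\Gamma'_{Reg}(R)$ depends only on this product. I will split into the cases $n=1$ and $n\geq 2$. For $n=1$ I use Theorem \ref{teo_grafvn_lap}; for $n \geq 2$ I use the degree formula of Theorem \ref{teo_deggrafvn'}.

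\emph{Case $n=1$.} By Theorem \ref{teo_grafvn_lap}, $\Gamma'_{Reg}(F_1) \cong |U'(F_1)|K_1 \cup \frac{|U''(F_1)|}{2}K_2$. Its pendant vertices are exactly the vertices of the $K_2$ components. So a pendant vertex exists if and only if $U''(F_1) \neq \emptyset$, that is, some unit is not self-inverse.
\begin{itemize}
\item If $|F_1| \in \{2,3\}$, every nonzero element satisfies $a^2=1$, so $U''(F_1)=\emptyset$ and there is no pendant vertex.
\item If $|F_1| \geq 4$, then $F_1^{*}$ is cyclic of order at least $3$. Its generator has order greater than $2$, so $U''(F_1) \neq \emptyset$ and a pendant vertex exists.
\end{itemize}
This gives the field part of the statement.

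\emph{Case $n \geq 2$.} Take a nonzero vertex and let $k$ be its number of zero coordinates, so $0 \leq k \leq n-1$.
\begin{itemize}
\item If $k=0$, Theorem \ref{teo_deggrafvn'} gives degree at least $2^n-2 \geq 2$.
\item If $k \geq 1$, set $P=|F_{i_1}|\cdots|F_{i_k}| \geq 2^k \geq 2$; also $2^{n-k} \geq 2$.
\begin{itemize}
\item In the ``otherwise'' branch the degree is $2^{n-k}-2+P \geq 2$, so it is never $1$.
\item In the self-inverse branch, degree $1$ forces $2^{n-k}+P=4$. With both terms at least $2$, this gives $2^{n-k}=2$ and $P=2$. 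Hence $n-k=1$, and $k=1$ because $P \geq 2^k$, so $n=2$ and the unique zero coordinate lies in a factor isomorphic to $\mathbb{F}_2$.
\end{itemize}
\end{itemize}
Thus for $n\geq 2$ a pendant vertex forces $R \cong \mathbb{F}_2 \times F$.

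Conversely, let $R \cong \mathbb{F}_2 \times F$ for any field $F$. The vertex $(0,1_F)$ has $1_F$ self-inverse, so the formula gives degree $2^{1}-3+|\mathbb{F}_2| = 1$. I would also confirm this directly: the only neighbour of $(0,1_F)$ is $(1,1_F)$. The vertices $(0,1_F)$ regularizes, other than itself, are nonzero elements of $\{0\}\times\{0,1_F\}$, and there are none. The vertices that regularize it are $(b,1_F)$ with $b \in \mathbb{F}_2$, and $(0,1_F)$ itself is excluded.

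The main obstacle is the case analysis on the degree formula. The hypotheses behind its two branches need care, in particular that ``self-inverse'' refers only to the nonzero coordinates. I would double-check the small case $\mathbb{F}_2\times\mathbb{F}_2$ directly, since there $k=1$ and $P=2$ are tight.
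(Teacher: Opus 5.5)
Your proof is correct and follows essentially the same route as the paper. The field case goes through $\Gamma'_{Reg}(F_1) \cong |U'(F_1)|K_1 \cup \frac{|U''(F_1)|}{2}K_2$, the case $n \geq 2$ is a case analysis of the degree formula forcing $n=2$, $k=1$ and the zero coordinate in an $\mathbb{F}_2$ factor, and $(0,1_F)$ serves as the degree-one witness for the converse. You are in fact slightly more careful than the paper: its proof never explicitly excludes the ``otherwise'' branch $2^{n-k}-2+|F_{i_1}|\cdots|F_{i_k}|=1$, and for $n=1$ it concludes only $R \cong F_1$ without deriving $|F_1| \geq 4$; your argument handles both.
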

\begin{proof}
    Given an arbitrary ring $R$, and fields $F, F_1$ with $|F_1|>4$.
\begin{enumerate}
    \item[$(\Longleftarrow)$] If $R \cong F_1$, then $\Gamma'_{Reg}(R)=|U'(R)|K_1 \cup \frac{|U''(R)|}{2}K_2$. Since $|R| \geq 4$ and $|U'(R)|\leq2$, we obtain $|U''(R)|>0$. The vertices in the subgraph $\frac{|U''(R)|}{2}K_2$ are pendant vertices. if $R \cong \mathbb{F}_2 \times F$, then there exists a vertex $(0,1_F) \in \mathbb{F}_2 \times F$ such that, by Theorem \ref{teo_deggrafvn'}, we obtain $\deg_{\Gamma'_{Reg}(R)}((0,1_F))=2^1-3+2=1$. Hence, the vertex $(0,1_F) \in V(\Gamma'_{Reg}(R))$ is a pendant vertex.
    \item[$(\Longrightarrow)$] Suppose $R \cong F_1 \times F_2 \times \dots \times F_n$, for some $n \geq 1$, where $F_1, F_2, \dots, F_n$ are fields. Suppose there exists a pendant vertex $a \in V(\Gamma'_{Reg}(R))$. By Theorem \ref{teo_deggrafvn'}, $1=\deg_{\Gamma'_{Reg}(R)}(a) \in \{2^{n}-2, 2^{n}-1, 2^{n-k}-3+|F_{i_1}||F_{i_2}|\dots|F_{i_k}|, 2^{n-k}-2+|F_{i_1}||F_{i_2}|\dots|F_{i_k}|: 0<k < n \}$. Consider the following possibilities.
    \begin{enumerate}
        \item If $2^n-2=1$, then $2^n=3$. This is a contradiction.
        \item If $2^n-1=1$, then $2^n=2$ so that $n=1$. Hence, $R=F_1$.
        \item If $2^{n-k}-3+|F_{i_1}||F_{i_2}|\dots|F_{i_k}|=1$, then $ 2^{n-k}+|F_{i_1}||F_{i_2}|\dots|F_{i_k}|=4$. Since $|F_{i_1}||F_{i_2}|\dots|F_{i_k}| \geq 2^k$, we obtain $4-2^{n-k} \geq 2^k \iff 2^k+2^{n-k} \leq 4$. Consequently, we must have $1=k=n-1$, so that $k=1$ and $n=2$. Hence, $2+|F_{i_1}|=4 \iff |F_{i_1}|=2$. Thus, $R \cong F_{i_1} \times F_2 \cong \mathbb{F}_2 \times F_2$, since every field of cardinality two is always isomorphic to the field $\mathbb{F}_2$. 
        \end{enumerate}
    \end{enumerate}
\end{proof}

After identifying when $\Gamma'_{Reg}(R)$ contains a pendant vertex, we examine this situation in more detail. It is natural to ask how many pendant vertices may occur when the graph is connected. The following proposition provides a precise description of the number of pendant vertices of $\Gamma'_{Reg}(R)$ in terms of the structure of the ring.

\begin{proposition}\label{prop_byktitikdaunvn}
    Let $R$ be a ring such that the graph $\Gamma'_{Reg}(R)$ is connected and has a pendant vertex. Then the following hold
    \begin{enumerate}
        \item The number of the pendant vertices of the graph $\Gamma'_{Reg}(R)$ is $1$ if and only if $R \cong \mathbb{F}_2 \times \mathbb{F}_{2^k}$ for some $k \in \mathbb{N}\setminus\{1\}$.
        \item The number of the pendant vertices of the graph $\Gamma'_{Reg}(R)$ is $2$ if and only if $R \cong \mathbb{F}_2 \times \mathbb{F}_2$ or $\mathbb{F}_2 \times \mathbb{F}_{p^k}$, where $k \in \mathbb{N}$ and $p$ is an odd prime number.
    \end{enumerate}
\end{proposition}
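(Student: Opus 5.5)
By Theorem~\ref{teo_grafvn_titikpendant}, the existence of a pendant vertex forces $R\cong F_1$ with $|F_1|\ge 4$, or $R\cong \mathbb{F}_2\times F$ for a field $F$. I first rule out the field case using connectivity. By Theorem~\ref{teo_grafvnterhubungnontr}, $\Gamma'_{Reg}(F_1)$ is disconnected whenever $|F_1\setminus\{0\}|>1$. Hence under the hypotheses of the proposition we must have $R\cong \mathbb{F}_2\times F$, where $F\cong\mathbb{F}_{p^k}$ is a finite field. Both parts then reduce to counting the pendant vertices of $\Gamma'_{Reg}(\mathbb{F}_2\times F)$ as a function of $F$.

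To count them, I partition the vertex set $(\mathbb{F}_2\times F)\setminus\{(0,0)\}$ into three types and apply Theorem~\ref{teo_deggrafvn'} to each.
\begin{enumerate}
\item The vertex $(1,0)$. Here $k=1$, the zero coordinate lies in $F$, and $1\in\mathbb{F}_2$ is self-inverse. Its degree is $2^{1}-3+|F|=|F|-1$.
\item The vertices $(0,b)$ with $b\neq 0$. Here $k=1$ and the zero coordinate lies in $\mathbb{F}_2$. The degree is $2-3+2=1$ if $b^{-1}=b$, and $2-2+2=2$ otherwise.
\item The vertices $(1,b)$ with $b\neq 0$. Here $k=0$, and the degree is $2$ if $b^{-1}=b$ and $3$ otherwise.
\end{enumerate}
Since the degree formulas in Theorem~\ref{teo_deggrafvn'} involve some counting of coincidences, I would double-check these values directly from the definition in this small case. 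For instance, $(0,b)$ is adjacent only to $(1,b^{-1})$ when $b=b^{-1}$.

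It follows that the pendant vertices are exactly the $(0,b)$ with $b\in U'(F)$, together with $(1,0)$ precisely when $|F|=2$. So the number of pendant vertices is $|U'(F)|+[\,|F|=2\,]$, where $[\,|F|=2\,]$ equals $1$ if $|F|=2$ and $0$ otherwise. The self-inverse units of $F$ are the roots of $x^2-1=(x-1)(x+1)$, so $|U'(F)|=1$ if $\operatorname{char}F=2$ and $|U'(F)|=2$ if $\operatorname{char}F$ is odd. This gives three cases:
\begin{enumerate}
\item $F=\mathbb{F}_2$: two pendant vertices, namely $(0,1)$ and $(1,0)$.
\item $F=\mathbb{F}_{2^k}$ with $k\ge 2$: exactly one pendant vertex.
\item $F=\mathbb{F}_{p^k}$ with $p$ odd: exactly two pendant vertices.
\end{enumerate}
These cases are exhaustive and mutually exclusive, so they yield both directions of both parts. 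The converse directions also require connectivity, which Theorem~\ref{teo_grafvnterhubungnontr} supplies since $n=2$.

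The main obstacle I expect is the correct application of the degree formula, in particular tracking which coordinate is zero and whether the self-inverse condition applies. The boundary case $F=\mathbb{F}_2$ is the most delicate, because there $(1,0)$ also becomes a pendant vertex. I would verify that case by listing the three vertices explicitly: they form the path $(0,1)-(1,1)-(1,0)$.
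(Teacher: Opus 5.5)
Your proposal is correct and follows essentially the same route as the paper. Both arguments reduce to $R\cong\mathbb{F}_2\times F$ via the connectivity and pendant-vertex theorems, and both read off from the degree theorem the degrees $|F|-1$ for $(1,0_F)$, $1$ or $2$ for $(0,b)$, and $2$ or $3$ for $(1,b)$. Both then count the pendant vertices as $|U'(F)|$, plus one exactly when $|F|=2$. Your explicit exclusion of the single-field case by connectivity, and your justification of $|U'(F)|\in\{1,2\}$ via the roots of $x^2-1$, spell out steps the paper leaves implicit.
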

\begin{proof}
    Let $R$ be a ring such that graph $\Gamma'_{Reg}(R)$ is connected and has a pendant vertex. By Theorem \ref{teo_grafvnterhubungnontr} and Theorem \ref{teo_grafvn_titikpendant}, it follows that $R \cong \mathbb{F}_2 \times F$, where $F$ is a field. We obtain $V(\Gamma'_{Reg}(R))=\{(1,0_F),(0,a),(1,a): a \in F \setminus \{0_F\}\}$, with
    \begin{align*}
        \deg_{\Gamma'_{Reg}(R)}(1,0_F)&=2-3+|F|=|F|-1,\\ \deg_{\Gamma'_{Reg}(R)}(0,a)&=\begin{cases}
            2-3+2, &\text{if } a^{-1}=a\\
            2-2+2, &\text{if } a^{-1}\neq a 
        \end{cases}=\begin{cases}
            1, &\text{if } a^{-1}=a\\
            2, &\text{if } a^{-1}\neq a,
        \end{cases}\\  
        \deg_{\Gamma'_{Reg}(R)}(1,a)&=\begin{cases}
            2^2-2, &\text{if } a^{-1}=a\\
            2^2-1, &\text{if } a^{-1}\neq a 
        \end{cases}=\begin{cases}
            2, &\text{if } a^{-1}=a\\
            3, &\text{if } a^{-1}\neq a,
        \end{cases}
    \end{align*}
    for every $a \in F \setminus \{0_F\}$. 
    Hence, the vertices that can serve as pendant vertices are $(1,0_F)$ with $|F|=2$ or $(0,a)$ with $a \in U'(F)$. If $F \cong \mathbb{F}_2$, then there are exactly two pendant vertices in the graph $\Gamma'_{Reg}(R)$, namely $(0,1)$ and $(1,0)$. If $F \cong \mathbb{F}_{p^k}$ for some $k \in \mathbb{N}$ where $p$ is an odd prime, there are exactly two pendant vertices in the graph $\Gamma'_{Reg}(R)$, namely $(0,1_F)$ and $(0,-1_F)$. If $F \ncong \mathbb{F}_2$ and $F \ncong \mathbb{F}_{p^k}$ for every $k \in \mathbb{N}$ and odd prime $p$, then $F \cong \mathbb{F}_{2^k}$ for some $k \in \mathbb{N}\setminus\{1\}$. Consequently, $|F|>2$, and there is exactly one pendant vertex in the graph $\Gamma'_{Reg}(R)$, namely $(0,1_F)$.
\end{proof}

The previous proposition determines how many pendant vertices may appear under the given conditions. The next result focuses on the behavior of such a vertex within the graph. In particular, it describes how a pendant vertex interacts with other vertices through the regularization relation in $\Gamma'_{Reg}(R)$.

\begin{theorem}\label{teo_pendantvert}
    Let $R$ be a ring. If the graph $\Gamma'_{Reg}(R)$ is connected and has a pendant vertex $a$, then $a$ is regularized only by itself and by one element that is adjacent to $a$. Furthermore, $a$ does not regularize any other element in the graph $\Gamma'_{Reg}(R)$.
\end{theorem}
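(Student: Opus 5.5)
By Theorem \ref{teo_grafvnterhubungnontr} and Theorem \ref{teo_grafvn_titikpendant}, connectivity together with the existence of a pendant vertex forces $R \cong \mathbb{F}_2 \times F$ for some field $F$. The case $R \cong F_1$ is excluded because $\Gamma'_{Reg}(F_1)$ is disconnected when $|F_1| \geq 4$. I will therefore work in $\mathbb{F}_2 \times F$ and use the degree list from the proof of Proposition \ref{prop_byktitikdaunvn}. That list shows that the only possible pendant vertices are $(1,0_F)$ when $|F| = 2$, and the vertices $(0,a)$ with $a \in U'(F)$, that is, $a = a^{-1}$.

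For each candidate $v$ I will compute two sets directly, coordinatewise: the set of all $w \in R$ with $vwv = v$ (the elements regularizing $v$), and the set of all nonzero $w$ with $wvw = w$ (the elements regularized by $v$). The relations to use are these. In a field, $x y x = x$ with $x \neq 0$ forces $y = x^{-1}$, and $y x y = y$ forces $y \in \{0, x^{-1}\}$. In any ring, $0 y 0 = 0$ holds for every $y$, and $y 0 y = y$ forces $y = 0$.

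\textbf{Case $v = (0,a)$ with $a = a^{-1}$.} The relation $vwv = v$ holds exactly for $w \in \{(0,a),(1,a)\}$, since $w_2 = a^{-1} = a$ and $w_1$ is free. So $v$ is regularized by itself and by $(1,a)$, which is its unique neighbour. Next, $wvw = w$ forces $w_1 = 0$ and $w_2 \in \{0,a\}$, so the only nonzero such $w$ is $v$ itself. Hence $v$ regularizes no other element.

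\textbf{Case $v = (1,0)$ in $\mathbb{F}_2 \times \mathbb{F}_2$.} This is symmetric to the previous case: $v$ is regularized by $(1,0)$ and by $(1,1)$, and it regularizes only itself.

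The only subtle point is interpreting ``regularized only by itself and by one element adjacent to $a$'' correctly. Here $aaa = a$ holds because $a$ is idempotent. The adjacency then comes entirely from the relation ``the neighbour regularizes $a$'', and not from ``$a$ regularizes the neighbour''. In both cases this is immediate from the coordinatewise computation above.
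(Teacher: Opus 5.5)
Your proof is correct and follows essentially the paper's route. You reduce to $\mathbb{F}_2 \times F$, read off the pendant vertices $(1,0)$ (when $|F|=2$) and $(0,a)$ with $a=a^{-1}$ from the degree list, and check the regularization relations; your coordinatewise computation of the complete sets $\{w : vwv=v\}$ and $\{w \neq 0 : wvw=w\}$ is slightly more explicit than the paper, which excludes further such elements only by appeal to pendancy. One small slip: $(0,-1_F)$ is not idempotent in odd characteristic, so $v^3=v$ holds because $a^2=1_F$ (as your own computation already shows), not by idempotency.
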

\begin{proof}
    Let $R$ be a ring such that the graph $\Gamma'_{Reg}(R)$ is connected and has a pendant vertex. By Theorem \ref{teo_grafvnterhubungnontr} and Theorem \ref{teo_grafvn_titikpendant}, we have $R \cong \mathbb{F}_2 \times F$, where $F$ is a field. Suppose that $a \in V(\Gamma'_{Reg}(R))$ is a pendant vertex. Consider the following cases.
    \begin{enumerate}
        \item If $F \cong \mathbb{F}_2$, then $V(\Gamma'_{Reg}(R))=\{(0,1),(1,0),(1,1)\}$. Using Proposition \ref{prop_byktitikdaunvn}, we obtain $a \in \{(0,1),(1,0)\}$. The elements $(0,1)$ and $(1,0)$ respectively regularize and are regularized by the elements $(0,1)$ and $(1,0)$. Furthermore, the elements $(0,1)$ and $(1,0)$ are regularized by the element $(1,1)$, but the elements $(0,1)$ and $(1,0)$ do not regularize the element $(1,1)$.
        \item If $F \cong \mathbb{F}_{p^k}$, for some $k \in \mathbb{N}$ and an odd prime number $p$, then by Proposition \ref{prop_byktitikdaunvn}, we have $a \in \{(0,1_F),(0,-1_F)\}$. It is known that $(0,1_F)(1,1_F), (0,-1_F)(1,-1_F) \in E(\Gamma'_{Reg}(R))$, since $(0,1_F)(1,1_F)(0,1_F)=(0,1_F)$ and $(0,-1_F)(1,-1_F)(0,-1_F)=(0,-1_F)$. However, $$(1,1_F)(0,1_F)(1,1_F) \neq (1,1_F) \text{ and } (1,-1_F)(0,-1_F)(1,-1_F) \neq (1,-1_F).$$ 
        On the other hand, the elements $(0,1_F)$ and $(0,-1_F)$ regularize and are regularized by themselves. Suppose that there exists another element that is regularized by $a$, then this leads to a contradiction with the fact that $a$ is a pendant vertex.
        \item If $F \cong \mathbb{F}_{2^k}$, then by Proposition \ref{prop_byktitikdaunvn}, we have $a = (0,1_F)$. It is known that $(0,1_F)(1,1_F)\in E(\Gamma'_{Reg}(R))$, since $(0,1_F)(1,1_F)(0,1_F)=(0,1_F)$. However, $(1,1_F)(0,1_F)(1,1_F) \neq (1,1_F)$. On the other hand, the element $(0,1_F)$ regularizes and is regularized by itself. Suppose that there exists another element that is regularized by $a$, then this leads to a contradiction with the fact that $a$ is a pendant vertex.
    \end{enumerate}
    Therefore, $a$ is regularized by itself and by one element that is adjacent to $a$, but $a$ does not regularize any other element in the graph $\Gamma'_{Reg}(R)$.
\end{proof}

The structural properties established in the previous theorem lead directly to further restrictions on the ring and its elements. In particular, the presence of a pendant vertex imposes a specific form on $R$ and reveals additional relationships among the pendant vertices themselves. The following corollary summarizes these consequences.

\begin{corollary}
    Let $R$ be a ring such that the graph $\Gamma'_{Reg}(R)$ is connected and has a pendant vertex. Then $R \cong \mathbb{F}_2 \times F$, where $F$ is a field. If $A$ denotes the set of all pendant vertices, then for every $a,b \in A$, we have $[ a ] = [ b ]$ or $a$ and $b$ are idempotent elements.
\end{corollary}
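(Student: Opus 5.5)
The first assertion follows directly from earlier results. If $\Gamma'_{Reg}(R)$ is connected and has a pendant vertex, then by Theorem \ref{teo_grafvnterhubungnontr} we have $n \geq 2$. Theorem \ref{teo_grafvn_titikpendant} then excludes the case $R \cong F_1$, which leaves $R \cong \mathbb{F}_2 \times F$ for some field $F$.

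For the second assertion I would use the explicit list of pendant vertices obtained in the proof of Proposition \ref{prop_byktitikdaunvn}, and split into three cases according to $F$.

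\textbf{Case $F \cong \mathbb{F}_{2^k}$ with $k \geq 2$.} Here $A=\{(0,1_F)\}$. Any $a,b \in A$ satisfy $a=b$, so $[a]=[b]$ holds trivially.

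\textbf{Case $F \cong \mathbb{F}_{p^k}$ with $p$ an odd prime.} Here $A=\{(0,1_F),(0,-1_F)\}$. The key observation is that $(0,-1_F)$ is the product of $(0,1_F)$ with the unit $(1,-1_F)$. Hence each element lies in the ideal generated by the other, and $[(0,1_F)]=[(0,-1_F)]=\{0\}\times F$. For $a=b$ the equality is again trivial.

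\textbf{Case $F \cong \mathbb{F}_2$.} Here $A=\{(0,1),(1,0)\}$. The ideals $[(0,1)]=\{0\}\times\mathbb{F}_2$ and $[(1,0)]=\mathbb{F}_2\times\{0\}$ are distinct, so the first alternative fails. Instead I would verify directly that $(0,1)^2=(0,1)$ and $(1,0)^2=(1,0)$, so both $a$ and $b$ are idempotent and the second alternative holds.

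The only delicate point is bookkeeping: the case list must be exhaustive. It is, because every finite field is either of characteristic $2$, hence $\mathbb{F}_{2^k}$ with $k=1$ or $k\geq 2$, or of odd characteristic. The pendant sets themselves are taken from the proof of Proposition \ref{prop_byktitikdaunvn}, so no new computation is needed beyond the ideal and idempotent checks above.
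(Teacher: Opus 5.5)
Your proposal is correct and follows essentially the paper's route. You obtain $R\cong\mathbb{F}_2\times F$ from the connectivity and pendant-vertex theorems, split into the three cases $F\cong\mathbb{F}_2$, $F$ of odd characteristic, and $F\cong\mathbb{F}_{2^k}$ with $k\ge 2$, and use the explicit pendant sets from the pendant-count proposition; the only cosmetic differences are that you get $[(0,1_F)]=[(0,-1_F)]$ by multiplying by the unit $(1,-1_F)$ rather than from $(0,-1_F)^2=(0,1_F)$, and you settle the singleton case by $a=b$ rather than by idempotence.
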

\begin{proof}
    Based on the proof of Theorem \ref{teo_pendantvert}, it is obtained that 
    \begin{enumerate}
        \item if $F \cong \mathbb{F}_2$, then $A=\{(0,1),(1,0)\}$, and hence $A \subseteq Id(R)$,
        \item if $F \cong \mathbb{F}_{p^k}$ with $k \in \mathbb{N}$ and $p$ an odd prime number, then $A=\{(0,1_{F}),(0,-1_{F})\}$. It follows that $(0,1_{F})R=(0,-1_{F})R$ since $(0,-1_F)^3=(0,-1_F)$ and $(0,-1_F)^2=(0,1_F)$,
        \item if $F \cong \mathbb{F}_{2^k}$ with $k \in \mathbb{N}$, then $A=\{(0,1_F)\}$, and hence $A \subseteq Id(R)$.
    \end{enumerate}
\end{proof}


The previous corollary describes the structure of rings whose graphs contain pendant vertices. Building on these observations, we now turn to a more specific class of graphs, namely trees. The following theorem gives a complete characterization of rings for which $\Gamma'_{Reg}(R)$ forms a nontrivial tree.

\begin{theorem}
    Let $R$ be a ring. The graph $\Gamma'_{Reg}(R)$ is a nontrivial tree if and only if $R \cong {\mathbb{F}_2 \times \mathbb{F}_2}$ or $R \cong \mathbb{F}_2 \times \mathbb{F}_3$.
\end{theorem}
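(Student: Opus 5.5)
Throughout, $R$ is taken to be a finite commutative von Neumann regular ring, so that $R \cong F_1 \times \dots \times F_n$ with each $F_i$ a field.

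For the forward direction, suppose $\Gamma'_{Reg}(R)$ is a nontrivial tree. It is then connected and nontrivial, so Theorem \ref{teo_grafvnterhubungnontr} forces $n \geq 2$. Every nontrivial tree has a pendant vertex, so Theorem \ref{teo_grafvn_titikpendant} applies. Since $n \geq 2$ rules out the case $R \cong F_1$, we get $R \cong \mathbb{F}_2 \times F$ for some field $F$.

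Next we count edges, using the degree list computed in the proof of Proposition \ref{prop_byktitikdaunvn}. The vertex set is $\{(1,0_F)\} \cup \{(0,a),(1,a) : a \in F\setminus\{0_F\}\}$, so there are $2|F|-1$ vertices. The degrees are as follows:
\begin{itemize}
\item $\deg(1,0_F)=|F|-1$;
\item $\deg(0,a)$ is $1$ or $2$, according as $a \in U'(F)$ or $a \in U''(F)$;
\item $\deg(1,a)$ is $2$ or $3$, according as $a \in U'(F)$ or $a \in U''(F)$.
\end{itemize}
Summing the degrees gives
$$2|E| = (|F|-1) + 3|U'(F)| + 5|U''(F)|.$$
A tree has $|E| = |V|-1 = 2|F|-2$. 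Substituting $|U''(F)| = |F|-1-|U'(F)|$ gives $4(|F|-1) = 6(|F|-1) - 2|U'(F)|$, that is, $|U'(F)| = |F|-1$. So every nonzero element of $F$ is self-inverse. Such elements satisfy $a^2 = 1$, and a field has at most two of them, so $|F| \le 3$. Hence $F \cong \mathbb{F}_2$ or $F \cong \mathbb{F}_3$.

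For the converse, we verify the two cases directly using the same degree data. For $\mathbb{F}_2 \times \mathbb{F}_2$ there are 3 vertices with degrees $1,1,2$, so the graph is the path $P_3$. For $\mathbb{F}_2 \times \mathbb{F}_3$ there are 5 vertices with degree sum $2+1+1+2+2=8$, so there are 4 edges. Connectivity comes from Theorem \ref{teo_grafvnterhubungnontr}, and a connected graph with $|V|-1$ edges is a tree. In fact the graph is the path
$$(0,1)-(1,1)-(1,0)-(1,2)-(0,2).$$

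The main point of care is the degree-sum bookkeeping: we must trust the degree formulas of Theorem \ref{teo_deggrafvn'} as specialised in Proposition \ref{prop_byktitikdaunvn}. If in doubt, they can be rechecked directly for the small cases. The remaining steps follow immediately from earlier results.
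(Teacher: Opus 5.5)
Your proof is correct, but it takes a different route from the paper's.

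\textbf{The paper's route.} The paper proves the forward direction by a case split on $n$ and on the field sizes, exhibiting an explicit cycle in each excluded ring:
\begin{enumerate}
\item a triangle such as $(a,1_{F_2})-(a^{-1},1_{F_2})-(0,1_{F_2})$ whenever some factor contains $a$ with $a^{-1}\neq a$;
\item the $8$-cycle through all nonzero elements of $\mathbb{F}_3\times\mathbb{F}_3$;
\item the triangle on $(1,\dots,1)$, $(1,0,\dots,0)$, $(1,1,0,\dots,0)$ when $n\geq 3$.
\end{enumerate}

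\textbf{Your route.} You note that a nontrivial tree has a leaf. Theorem \ref{teo_grafvn_titikpendant} then collapses every case to $\mathbb{F}_2\times F$. This step is what disposes of $\mathbb{F}_3\times\mathbb{F}_3$ and $n\geq 3$, where every vertex has degree at least $2$. You then apply the handshake lemma together with $|E|=|V|-1$. Your bookkeeping is right. It actually gives the cycle rank $|E|-|V|+1=|U''(F)|$ of $\Gamma'_{Reg}(\mathbb{F}_2\times F)$, which is more than the theorem needs.

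\textbf{Trade-off.} Your argument is shorter and more quantitative. However, it depends on the degree formula of Theorem \ref{teo_deggrafvn'} and on the pendant-vertex theorem. The written proof of the latter skips the possibility $2^{n-k}-2+|F_{i_1}|\cdots|F_{i_k}|=1$; this is harmless, since that quantity is at least $2$. The paper's argument needs only a few direct product checks. It also produces the explicit short cycles on which the subsequent girth corollary implicitly relies.
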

\begin{proof}
    Let $R$ be a ring.
    \begin{enumerate}
        \item[$(\Longleftarrow)$] If $R \cong \mathbb{F}_2 \times 
        \mathbb{F}_2$, then $\Gamma'_{Reg}(R) \cong P_3$. If $R \cong \mathbb{F}_2 \times \mathbb{F}_3$, then $\Gamma'_{Reg}(R) \cong P_5$. Thus, the graphs $\Gamma'_{Reg}(\mathbb{F}_2 \times 
        \mathbb{F}_2)$ and $\Gamma'_{Reg}(\mathbb{F}_2 \times 
        \mathbb{F}_3)$ are trees.
        \item[$(\Longrightarrow)$] If the graph $\Gamma'_{Reg}(R)$ is a nontrivial tree, then $\Gamma'_{Reg}(R)$ is a nontrivial connected graph. By Theorem \ref{teo_grafvnterhubungnontr}, we obtain $R \cong F_1 \times F_2 \times \dots, \times F_n$, where $F_1, F_2, \dots, F_n$ are fields and $n \geq 2$. Assume that $R \ncong \mathbb{F}_2 \times \mathbb{F}_2$ and $R \ncong \mathbb{F}_2 \times \mathbb{F}_3$. Then the following possibilities arise.
        \begin{enumerate}
            \item Case $R \cong \mathbb{F}_2 \times F_2$, where $F_2$ is a field and $|F_2| > 3$. This means that there exists $a \in F_2$ such that $a^{-1} \neq a$. Consequently, the vertices $(1,0_{F_2})$, $(1,a)$, and $(1,a^{-1})$ in the graph $\Gamma'_{Reg}(R)$ form a cycle. This contradicts the fact that $\Gamma'_{Reg}(R)$ is a tree.
            \item Case $R \cong \mathbb{F}_3 \times \mathbb{F}_3$. There exist $1,-1 \in U'(\mathbb{F}_3)$, so a cycle $(1,0) - (1,1)-(0,1)-(-1,1)-(-1,0)-(-1,-1)-(0,-1)-(1,-1)-(1,0)$ is formed in the graph $\Gamma'_{Reg}(R)$. This contradicts the fact that $\Gamma'_{Reg}(R)$ is a tree.
            \item Case $R \cong F_1 \times F_2$, where $F_1$ and $F_2$ are fields and $\text{maks}\{|F_1|,|F_2|\}>3$. Without loss of generality assume that $|F_1|>3$, which means that there exists $a \in F_1$ such that $a^{-1} \neq a$. Hence there is a cycle $(a,1_{F_2})-(a^{-1},1_{F_2})-(0,1_{F_2})$. This contradicts the fact that $\Gamma'_{Reg}(R)$ is a tree.
            \item Case $R \cong F_1 \times F_2 \times \dots \times F_n$, where $F_1,F_2,\dots,F_n$ are fields and $n \geq 3$. The vertices $(1_{F_1},1_{F_2},\dots,1_{F_n}), (1_{F_1},0_{F_2},\dots,0_{F_n})$, and $(1_{F_1},1_{F_2},0_{F_3},\dots,0_{F_n})$ in the graph $\Gamma'_{Reg}(R)$ form a cycle. This contradicts the fact that $\Gamma'_{Reg}(R)$ is a tree.
        \end{enumerate}
        Therefore, it must be that $R \cong \mathbb{F}_2 \times \mathbb{F}_2$ or $R \cong \mathbb{F}_2 \times \mathbb{F}_3$.
    \end{enumerate}
\end{proof}

The characterization of $\Gamma'_{Reg}(R)$ as a tree in the previous theorem provides a useful starting point to examine its cycle structure. In particular, it allows us to determine the girth of the graph in the nontrivial connected case. The following corollary gives a complete description of the girth of $\Gamma'_{Reg}(R)$ based on the structure of the ring.

\begin{corollary}
    Let $R$ be a ring such that the graph $\Gamma'_{Reg}(R)$ is a nontrivial connected graph. Then
    $$\text{girth}(\Gamma'_{Reg}(R))=\begin{cases}
        \infty, &\text{if } R \cong \mathbb{F}_2 \times \mathbb{F}_2 \text{ or } R \cong \mathbb{F}_2 \times \mathbb{F}_3\\
        8, &\text{if } R \cong \mathbb{F}_3 \times \mathbb{F}_3\\
        3, &\text{otherwise}.
    \end{cases}$$
\end{corollary}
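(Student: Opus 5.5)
The plan is a case split driven by the classification already obtained. Since $\Gamma'_{Reg}(R)$ is a nontrivial connected graph, Theorem \ref{teo_grafvnterhubungnontr} gives $R \cong F_1 \times \dots \times F_n$ with $n \geq 2$. I would then treat three situations: the two tree rings, the ring $\mathbb{F}_3 \times \mathbb{F}_3$, and everything else.

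\textbf{Tree rings.} For $R \cong \mathbb{F}_2 \times \mathbb{F}_2$ or $\mathbb{F}_2 \times \mathbb{F}_3$, the preceding theorem shows the graph is a tree ($P_3$ or $P_5$). Hence it is acyclic and the girth is $\infty$ by convention.

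\textbf{The generic case.} Here $R$ is neither a tree ring nor $\mathbb{F}_3 \times \mathbb{F}_3$. The case analysis in the $(\Longrightarrow)$ part of the tree theorem already exhibits an explicit $3$-cycle in each of the remaining possibilities:
\begin{itemize}
\item $\mathbb{F}_2 \times F$ with $|F|>3$;
\item $F_1 \times F_2$ with $\max\{|F_1|,|F_2|\}>3$;
\item $n \geq 3$.
\end{itemize}
I would re-check each triangle directly from the definition of adjacency. For instance, $(a,1)$, $(a^{-1},1)$, $(0,1)$ form a triangle because $(a,1)(a^{-1},1)(a,1)=(a,1)$, $(0,1)(a,1)(0,1)=(0,1)$ and $(0,1)(a^{-1},1)(0,1)=(0,1)$. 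I would also confirm that these cases together with the first two exhaust all $n \geq 2$. When $n=2$ and both fields have size at most $3$, the only options are $\mathbb{F}_2\times\mathbb{F}_2$, $\mathbb{F}_2\times\mathbb{F}_3$ (up to order) and $\mathbb{F}_3\times\mathbb{F}_3$. Since $3$ is the least possible girth, the girth is $3$.

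\textbf{The ring $\mathbb{F}_3 \times \mathbb{F}_3$.} This is the main step. The approach is to determine the whole graph, not just to look for short cycles. In $\mathbb{F}_3$ every nonzero element is self-inverse, so coordinatewise $a_i b_i a_i = a_i$ holds iff $a_i = 0$ or $b_i = a_i$. Consequently $aba=a$ iff $b$ agrees with $a$ on the support of $a$. From this:
\begin{itemize}
\item a unit vertex $(u_1,u_2)$ is regularized only by itself, and regularizes exactly $(u_1,0)$ and $(0,u_2)$, so it has exactly these two neighbours;
\item a vertex $(x,0)$ is adjacent exactly to $(x,1)$ and $(x,-1)$, since no vertex other than itself agrees with it on an empty-support pattern and $(x,0)$ and $(0,y)$ are never adjacent;
\item symmetrically, $(0,y)$ is adjacent exactly to $(1,y)$ and $(-1,y)$.
\end{itemize}
This is consistent with Theorem \ref{teo_deggrafvn'}, which gives degree $2$ throughout. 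So the graph is $2$-regular on $8$ vertices. The $8$-cycle displayed in case (b) of the tree theorem passes through all eight vertices, so the graph is connected and 2-regular, hence exactly $C_8$, and its girth is $8$. The only care needed is verifying the adjacency lists completely, so that no chord producing a shorter cycle is overlooked; the degree count from Theorem \ref{teo_deggrafvn'} serves as the check.
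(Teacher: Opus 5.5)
Your proposal is correct and follows essentially the route the paper intends: the corollary is read off from the case analysis in the proof of the tree theorem, where the tree rings give girth $\infty$, the cases $\mathbb{F}_2\times F$ with $|F|>3$, $\max_i|F_i|>3$, and $n\ge 3$ each contain an explicit triangle, and $\mathbb{F}_3\times\mathbb{F}_3$ contains the displayed $8$-cycle. Your direct check that $\Gamma'_{Reg}(\mathbb{F}_3\times\mathbb{F}_3)$ is $2$-regular, so that this Hamiltonian $8$-cycle is the whole graph, supplies the ``no shorter cycle'' step that the paper only asserts through $\Gamma'_{Reg}(\mathbb{F}_3\times\mathbb{F}_3)\cong C_8$.
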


The preceding result highlights how the presence of cycles influences the overall structure of $\Gamma'_{Reg}(R)$. To further refine this analysis, it is useful to consider specific induced subgraphs that impose stronger structural restrictions. One such example is the house graph. The following lemma shows that the existence of this induced subgraph prevents the graph from belonging to several well-known graph classes.

\begin{lemma}\label{lemmasubgrafrumahCnWm}
    If a graph $G$ contains an induced subgraph that is isomorphic to a house graph, then $G \ncong P_k$, $G \ncong C_n$, and $G \ncong W_m$ for any natural numbers $k,n,m$.
\end{lemma}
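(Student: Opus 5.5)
The plan is to argue by contradiction. We exploit the fact that the house graph $H$ carries local features that no path, cycle, or wheel can carry in an \emph{induced} subgraph. Label the house graph as in Figure~\ref{gambargrafrumah}: the square has vertices $v_1,v_2,v_3,v_4$ with edges $v_1v_2, v_2v_3, v_3v_4, v_4v_1$, and the roof vertex $v_5$ is adjacent to $v_3$ and $v_4$. So $H$ contains the triangle $v_3v_4v_5$, and $v_3,v_4$ have degree $3$ in $H$. Suppose $G$ has an induced subgraph isomorphic to $H$. We treat the three classes separately.

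For paths, note that every induced subgraph of $P_k$ is a disjoint union of paths, so it is acyclic. Since $H$ contains a cycle, $G\ncong P_k$ for every $k$. (Alternatively, $P_k$ has maximum degree $2$, while $H$ has a vertex of degree $3$ and induced subgraphs cannot raise degrees above those in $G$.)

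For cycles, every proper induced subgraph of $C_n$ is a disjoint union of paths, hence acyclic. The only non-proper induced subgraph is $C_n$ itself. $H$ contains a cycle but is not $2$-regular, so $H\not\cong C_n$. The degree-$3$ vertex also rules out $C_n$ directly, since $\Delta(C_n)=2$.

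For wheels, write $W_m=C_m+K_1$ with hub $h$. First suppose the induced copy of $H$ avoids $h$. Then it is an induced subgraph of the rim $C_m$, which has maximum degree $2$; this is a contradiction. So $h$ lies in the copy of $H$. Since $h$ is adjacent to every other vertex of $W_m$, it is adjacent to all other four vertices of the copy. Its image in $H$ would therefore have degree $4$, but $H$ has maximum degree $3$. This contradiction gives $G\ncong W_m$.

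The main obstacle is only bookkeeping about small degenerate cases. For $m=3$ we have $W_3=K_4$, and small $k,n$ leave too few vertices; all of these are covered by the degree and vertex-count arguments, since $H$ has $5$ vertices. I must also stress throughout that induced subgraphs inherit adjacency exactly, which is what makes the hub argument valid.
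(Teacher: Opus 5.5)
Your proof is correct and follows essentially the same approach as the paper. The degree-$3$ vertex of the house rules out $P_k$ and $C_n$. For $W_m$ you split on whether the hub lies in the induced copy, which would give a vertex of degree $4$ in the house, or not, which puts the copy inside the rim where all degrees are at most $2$.
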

\begin{proof}
    If a graph $G$ contains an induced subgraph that is isomorphic to a house graph, say $A$, then there exists a vertex $x \in V(A)$ with $\deg_{A}(x)=3$, and hence $\deg_{A}(x)\geq 3$. Since every vertex degree in a path graph is at most $2$ and every vertex degree in a cycle graph is $2$, it follows that $G \ncong P_k$ and $G \ncong C_n$ for any natural numbers $k,n$. Next, suppose that $G \cong W_m$ for some natural number $m$. This means that there exists $a \in V(G)$ such that $ab \in E(G)$ for every $b \in V(G)\setminus\{a\}$. If $a \in V(A)$, then it must be that $\deg_{A}(a)=4$ since $|V(A)|=5$. This contradicts a property of the house graph, namely that $\deg_A(v)\leq 3$ for every $v \in V(A)$. Observe that for every $y \in V(A)$ we have $\deg_G(y)=3$. If $a \notin V(A)$, then $\deg_{A}(y) \leq 2$. This contradicts the fact that there exists a vertex in the house graph $A$ with degree $3$. Hence, $G \ncong W_m$ for any natural number $m$.
\end{proof}

The previous lemma shows that the presence of a house graph as an induced subgraph rules out several familiar graph structures. It is therefore important to identify conditions under which such a subgraph appears in $\Gamma'_{Reg}(R)$. The following lemma provides a criterion based on the sizes of the component fields in the decomposition of $R$.

\begin{lemma}\label{lemmamaksFgrafrumah}
    Let $R \cong F_1 \times F_2 \times \dots \times F_n$ be a ring, where $F_1, F_2, \dots, F_n$ are fields and $n \geq 2$. If $\text{maks}\{|F_1|,|F_2|,\dots,|F_n|\}> 3$, then the graph $\Gamma'_{Reg}(R)$ contains an induced subgraph that is isomorphic to the house graph.
\end{lemma}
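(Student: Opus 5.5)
The plan is to exhibit five explicit vertices of $\Gamma'_{Reg}(R)$ and check all ten pairs to confirm that they induce a house graph. By the isomorphism-invariance theorem proved earlier, we may work directly in $F_1 \times F_2 \times \dots \times F_n$. After reindexing the factors we assume $|F_1| > 3$. The polynomial $x^2 - 1$ has at most two roots in $F_1$, and $0_{F_1}$ is not a root, so there is a nonzero $a \in F_1$ with $a^2 \neq 1_{F_1}$, that is, $a^{-1} \neq a$.

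Since $n \geq 2$, I would take every vertex to have $0$ in coordinates $3,\dots,n$; these coordinates satisfy $0\cdot 0 \cdot 0 = 0$ and never obstruct adjacency. Writing only the first two coordinates, the candidate vertices are
$$u_1=(a,1),\quad u_2=(a^{-1},1),\quad u_3=(0,1),\quad u_4=(a,0),\quad u_5=(a^{-1},0).$$
The adjacency test is done coordinatewise. The basic fact is that in a field $xyx = x$ holds if and only if $x = 0$ or $y = x^{-1}$.

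With this fact the pairs are checked as follows.
\begin{itemize}
\item $u_1u_2$ is an edge, since $u_1u_2u_1 = u_1$.
\item $u_1u_3$ and $u_2u_3$ are edges, since $u_3u_1u_3 = u_3$ and $u_3u_2u_3 = u_3$.
\item $u_1u_5$ is an edge, since $u_5u_1u_5 = (a^{-1}aa^{-1},0) = u_5$. Likewise $u_2u_4$ is an edge.
\item $u_4u_5$ is an edge, since $u_4u_5u_4 = u_4$.
\item $u_1u_4$ is not an edge: both $u_1u_4u_1$ and $u_4u_1u_4$ have first coordinate $a^3 \neq a$, because $a^2 \neq 1$. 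Symmetrically, $u_2u_5$ is not an edge, since $a^{-3} \neq a^{-1}$.
\item $u_3u_4$ is not an edge, since both triple products equal $(0,0)$, which differs from $u_3$ and from $u_4$. The same argument shows $u_3u_5$ is not an edge.
\end{itemize}

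The induced edge set is therefore exactly $\{u_1u_2,u_1u_3,u_2u_3,u_1u_5,u_5u_4,u_4u_2\}$. This is the $4$-cycle $u_1 - u_5 - u_4 - u_2 - u_1$ together with the apex $u_3$ joined to the consecutive cycle vertices $u_1,u_2$, which is precisely the house graph of Figure \ref{gambargrafrumah}.

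The only delicate step is showing that the "diagonal" pairs $u_1u_4$ and $u_2u_5$ are non-edges. This is where the hypothesis $\max_i |F_i| > 3$ is used, through the choice of $a$ with $a \neq a^{-1}$. All remaining verifications are routine coordinatewise computations.
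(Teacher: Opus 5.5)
Your proposal is correct and is essentially the paper's own proof. The paper uses the same five vertices $(0,1)$, $(a,1)$, $(a^{-1},1)$, $(a,0)$, $(a^{-1},0)$ (placed in coordinates $i,j$ and padded with zeros) and obtains the same six edges, while your explicit check of the four non-adjacent pairs via $a^2\neq 1_{F_1}$ supplies a verification that the paper only asserts.
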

\begin{proof}
    Let $R \cong F_1 \times F_2 \times \dots \times F_n$ be a ring, where $F_1, F_2, \dots, F_n$ are fields. If $\text{maks}\{|F_1|,|F_2|,\dots,|F_n|\}>3$, then there exists $1 \leq i \leq n$ such that $|F_i|>3$. Consequently, there exists $0 \neq a_i \in F_i$ such that $a_i^{-1} \neq a_i$. Since $n \geq 2$, one can choose $1 \leq j \leq n$ with $j \neq i$. Suppose that
        \begin{align*}
            \textbf{0}&=(0_{F_1},0_{F_2},\dots,0_{F_n}),\\
            \textbf{x}&=(x_1,x_2,\dots,x_n), \text{ where } x_c=\begin{cases}
            1_{F_j}, &\text{if } c=j\\
            0_{F_c}, &\text{if } c \neq j
        \end{cases} \text{ for } 1 \leq c \leq n,\\
        \textbf{y}&=(y_1,y_2,\dots,y_n), \text{ where } y_c=\begin{cases}
            1_{F_j}, &\text{if } c=j\\
            a_i, &\text{if } c=i\\
            0_{F_c}, &\text{if } c \notin \{i,j\}
        \end{cases} \text{ for } 1 \leq c \leq n,\\
        \textbf{z}&=(z_1,z_2,\dots,z_n), \text{ where } z_c=\begin{cases}
            1_{F_j}, &\text{if } c=j\\
            a_i^{-1}, &\text{if } c=i\\
            0_{F_c}, &\text{if } c \notin \{i,j\}
        \end{cases} \text{ for } 1 \leq c \leq n,\\
        \textbf{u}&=(u_1,u_2,\dots,u_n), \text{ where } u_c=\begin{cases}
            a_i, &\text{if } c=i\\
            0_{F_c}, &\text{if } c\neq i
        \end{cases} \text{ for } 1 \leq c \leq n,\\
        \textbf{v}&=(v_1,v_2,\dots,v_n), \text{ where } v_c=\begin{cases}
            a_i^{-1}, &\text{if } c=i\\
            0_{F_c}, &\text{if } c\neq i
        \end{cases} \text{ for } 1 \leq c \leq n.
        \end{align*}
        Let the set $A=\{\textbf{x}, \textbf{y}, \textbf{z}, \textbf{u}, \textbf{v}\}$ be defined. 
        Thus, $E(\langle A \rangle_{\Gamma'_{Reg}(R)})=\{\textbf{xy}, \textbf{xz}, \textbf{yz}, \textbf{} \textbf{uz}, \textbf{vy}, \textbf{uv}\}$. Consequently, the subgraph induced by $A$ in the graph $\Gamma'_{Reg}(R)$ is isomorphic to the house graph.
\end{proof}

The preceding lemmas describe both the structural consequences of the presence of a house graph and the conditions that ensure its existence in $\Gamma'_{Reg}(R)$. By combining these observations, one can determine precisely when such an induced subgraph does not occur. The following theorem provides a complete characterization of rings for which $\Gamma'_{Reg}(R)$ is free of induced subgraphs isomorphic to the house graph.

\begin{theorem}\label{teo_subgrygtdkpunyarumah}
    Let $R \cong F_1 \times F_2 \times \dots \times F_n$, where $F_1, F_2, \dots, F_n$ are fields and $n \geq 2$. The graph $\Gamma'_{Reg}(R)$ does not contain an induced subgraph isomorphic to the house graph if and only if $R \cong \mathbb{F}_2 \times \mathbb{F}_2$, or $R \cong \mathbb{F}_2 \times \mathbb{F}_3$, or $R \cong \mathbb{F}_3 \times \mathbb{F}_3$, or $R \cong \mathbb{F}_2 \times \mathbb{F}_2 \times \mathbb{F}_2$. 
\end{theorem}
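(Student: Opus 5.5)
The argument splits into the two directions of the equivalence, each reduced to earlier results or to a short finite check.

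For the forward direction ($\Longrightarrow$), assume $\Gamma'_{Reg}(R)$ has no induced house subgraph. By the contrapositive of Lemma \ref{lemmamaksFgrafrumah}, every factor satisfies $|F_i|\le 3$, so each $F_i\cong\mathbb{F}_2$ or $\mathbb{F}_3$. It then remains to bound $n$ and exclude mixed products. I would exhibit explicit house graphs in three minimal configurations: $\mathbb{F}_2\times\mathbb{F}_2\times\mathbb{F}_3$ (more generally, $n=3$ with some factor $\mathbb{F}_3$), $\mathbb{F}_2^{\,4}$, and $n\ge 3$ in general. Padding coordinates make this suffice: if $R=S\times T$ and $H$ is an induced house in $\Gamma'_{Reg}(S)$, then the vertices $(s,0_T)$ form an induced house in $\Gamma'_{Reg}(R)$. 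This holds because $(s,0)(s',0)(s,0)=(s,0)$ iff $ss's=s$, i.e.\ the map $s\mapsto (s,0)$ is an induced embedding. Hence it is enough to find a house in each minimal forbidden ring, namely $\mathbb{F}_2\times\mathbb{F}_2\times\mathbb{F}_3$ and $\mathbb{F}_2^{\,4}$. Together with the fact that for $n=2$ all four combinations of $\mathbb{F}_2,\mathbb{F}_3$ are in the list (up to isomorphism $\mathbb{F}_3\times\mathbb{F}_2\cong\mathbb{F}_2\times\mathbb{F}_3$), this gives exactly the listed rings.

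Finding these houses is the main work. In $\mathbb{F}_2\times\mathbb{F}_2\times\mathbb{F}_3$ I would try imitating Lemma \ref{lemmamaksFgrafrumah}, using $-1$ in the $\mathbb{F}_3$ slot together with the $0/1$ coordinates. A candidate is the vertex set $(1,0,1),(1,0,-1),(0,0,1),(0,0,-1),(1,1,0)$, or a similar set; I would check adjacency via the criterion that $a,b$ are adjacent iff on the support of $a$ one has $b=a^{-1}$, or on the support of $b$ one has $a=b^{-1}$. Since this criterion is coordinatewise, the verification is mechanical. For $\mathbb{F}_2^{\,4}$, adjacency of distinct idempotents reduces to comparability of their supports, so $\Gamma'_{Reg}(\mathbb{F}_2^n)$ is the comparability graph of the nonempty subsets of $\{1,\dots,n\}$. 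There I need five subsets whose comparability graph is a house: a $4$-cycle plus an apex adjacent to two consecutive cycle vertices. Note that this graph has a triangle and an induced $C_4$. An induced $C_4$ in a comparability graph requires a zigzag pattern $A\subset B\supset C\subset D\supset A$ with $A,C$ incomparable and $B,D$ incomparable, which exists in four elements, e.g.\ $\{1\}\subset\{1,2\}\supset\{2\}\subset\{2,3\}\supset$ \ldots; I would search systematically for such a pattern with an added apex. I expect this step to be the main obstacle. If no house exists in $\mathbb{F}_2^{\,4}$, then the theorem as stated would need correction, so I would double-check this carefully.

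For the reverse direction ($\Longleftarrow$), I would verify the four small graphs directly. For $\mathbb{F}_2\times\mathbb{F}_2$ and $\mathbb{F}_2\times\mathbb{F}_3$ the graphs are $P_3$ and $P_5$ (by the tree theorem), which are too small to contain a house since they have no cycles. For $\mathbb{F}_3\times\mathbb{F}_3$ the corollary gives girth $8$, so there is no triangle, while a house contains a triangle. For $\mathbb{F}_2^{\,3}$ there are $7$ vertices: the comparability graph of nonempty subsets of a $3$-set. The vertex $\{1,2,3\}$ is universal, and any house containing it would force it to have degree $4$ inside the house, which is impossible. Removing it leaves the comparability graph of singletons and pairs, which is a $6$-cycle and hence has no house.
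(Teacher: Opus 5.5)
Your architecture is sound and close to the paper's. In the reverse direction you inspect the four small graphs directly; your triangle-free and universal-vertex arguments replace the paper's appeal to $C_8$, $W_6$ and Lemma \ref{lemmasubgrafrumahCnWm}. In the forward direction you use Lemma \ref{lemmamaksFgrafrumah} to reduce to factors $\mathbb{F}_2,\mathbb{F}_3$, then pad with zeros.

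The gap is that the forward direction rests entirely on explicit houses, and you produce none. The one candidate you give fails: in $\mathbb{F}_2\times\mathbb{F}_2\times\mathbb{F}_3$ your set $(1,0,1),(1,0,-1),(0,0,1),(0,0,-1),(1,1,0)$ induces only the two edges $(1,0,1)(0,0,1)$ and $(1,0,-1)(0,0,-1)$. Imitating Lemma \ref{lemmamaksFgrafrumah} cannot work there. That construction needs a unit with $a\neq a^{-1}$, and every unit of $\mathbb{F}_3$ is self-inverse. The useful role of $-1$ is different: it makes two vertices with the \emph{same support} non-adjacent. The paper takes, in $\mathbb{F}_3\times F_2\times F_3$ padded with zeros, $b_1=(2,1,1)$, $b_2=(0,0,1)$, $b_3=(0,1,0)$, $b_4=(1,1,1)$, $b_5=(1,1,0)$. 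Then $b_1b_2b_4b_3$ is an induced $4$-cycle, because $b_1b_4$ fails ($2\neq 1^{-1}$ in the first coordinate) and $b_2b_3$ fails. The apex $b_5$ is adjacent exactly to the consecutive pair $b_3,b_4$.

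For $\mathbb{F}_2^{4}$ a house does exist, so the theorem needs no correction. In your comparability language, $\{1\}\subset\{1,3,4\}\supset\{4\}\subset\{1,2,4\}\supset\{1\}$ is an induced $C_4$. The apex is $\{2,4\}$: it lies in the chain $\{4\}\subset\{2,4\}\subset\{1,2,4\}$ and is incomparable to $\{1\}$ and $\{1,3,4\}$. This is the paper's case (a).

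Your claim that it suffices to treat $\mathbb{F}_2\times\mathbb{F}_2\times\mathbb{F}_3$ and $\mathbb{F}_2^{4}$ is also not justified by zero-padding alone. The rings $\mathbb{F}_3^{3}$ and $\mathbb{F}_2\times\mathbb{F}_3\times\mathbb{F}_3$ contain neither as a direct factor. You need one more observation. For instance, the coordinatewise map sending $\mathbb{F}_2$ onto $\{0,1\}\subseteq F$ is injective and multiplicative, hence an induced embedding of the graphs, since adjacency is defined by products. Alternatively, choose a house whose non-$\mathbb{F}_3$ coordinates lie in $\{0,1\}$, as the paper's does; it then works in every $\mathbb{F}_3\times F\times F'$.
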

\begin{proof}
    Let $R \cong F_1 \times F_2 \times \dots \times F_n$, where $F_1, F_2, \dots, F_n$ are fields and $n \geq 2$.
    \begin{enumerate}
        \item[$(\Longleftarrow)$] If $R \cong \mathbb{F}_2 \times \mathbb{F}_2$, then $|V(\Gamma'_{Reg}(R))|=3<5$, so the graph $\Gamma'_{Reg}(R)$ does not contain an induced subgraph isomorphic to the house graph. If $R \cong \mathbb{F}_2 \times \mathbb{F}_3$, then the graph $\Gamma'_{Reg}(R)$ is isomorphic to the graph $P_5$, and hence it does not contain an induced subgraph isomorphic to the house graph since every induced subgraph of order $5$ is necessarily a path. If $R \cong \mathbb{F}_3 \times \mathbb{F}_3$, then the graph $\Gamma'_{Reg}(R)$ is isomorphic to the graph $C_8$. By Lemma \ref{lemmasubgrafrumahCnWm}, the graph $\Gamma'_{Reg}(R)$ does not contain an induced subgraph isomorphic to the house graph. If $R \cong \mathbb{F}_2 \times \mathbb{F}_2 \times \mathbb{F}_2$, then the graph $\Gamma'_{Reg}(R)$ is isomorphic to the graph $W_6$. By Lemma \ref{lemmasubgrafrumahCnWm}, the graph $\Gamma'_{Reg}(R)$ does not contain an induced subgraph isomorphic to the house graph. 
        \item[$(\Longrightarrow)$] If $R \ncong \mathbb{F}_2 \times \mathbb{F}_3$ and $R \ncong \mathbb{F}_3 \times \mathbb{F}_3$ and $R \ncong \mathbb{F}_2 \times \mathbb{F}_2 \times \mathbb{F}_2$, then we consider the following cases.
        \begin{enumerate}
            \item Case $F_i \cong \mathbb{F}_2$ for every $i=1,2,\dots,n$, with $n \geq 4$. There exists a set $A = \{a_1,a_2,a_3,a_4,a_5\} \subseteq V(\Gamma'_{Reg}(R))$, with 
            \begin{align*}
                &a_1=(1,0,0,\dots,0,0,0), \text{ } a_2=(1,0,0,\dots,0,1,1), \text{ } a_3=(1,1,0,\dots,0,0,1), \\&a_4=(0,0,0,\dots,0,0,1),\text{ } a_5=(0,1,0,\dots,0,0,1).
             \end{align*} 
            Consequently, $E(\langle A \rangle_{\Gamma'_{Reg}(R)})=\{a_1a_2, a_1a_3, a_2a_4, a_3a_4, a_3a_5, a_4a_5\}$. Hence, the subgraph induced by $A$ in the graph $\Gamma'_{Reg}(R)$ is isomorphic to the house graph.
            \item Case $n \geq 3$ and there exists $1 \leq i \leq n$ such that $F_i \cong \mathbb{F}_3$ and $|F_j| \leq 3$ for every $j=1,2,\dots,n$, then $R \cong F_i \times \prod_{1\leq j \leq n, j \neq i}F_j$. There exists a set $ B = \{b_1,b_2,b_3,b_4,b_5\} \subseteq V(\Gamma'_{Reg}(R))$, with 
            \begin{align*}
                &b_1=(2,1,1,0,\dots,0), \quad b_2=(0,0,1,0,\dots,0), \quad b_3=(0,1,0,0,\dots,0),\\ &b_4=(1,1,1,0,\dots,0),\quad b_5=(1,1,0,0,\dots,0).
            \end{align*} 
            Consequently, $E(\langle B \rangle_{\Gamma'_{Reg}(R)})=\{b_1b_2, b_1b_3, b_2b_4, b_3b_4, b_3b_5, b_4b_5\}$. Thus, the subgraph induced by $B$ in the graph $\Gamma'_{Reg}(R)$ is isomorphic to the house graph.
            \item If there exists $1 \leq i \leq n$ such that $|F_i|>3$, then $\text{maks}\{|F_1|,|F_2|,\dots,|F_n|\}>3$. Based on Lemma \ref{lemmamaksFgrafrumah}, the graph $\Gamma'_{Reg}(R)$ contains an induced subgraph that is isomorphic to the house graph. 
        \end{enumerate}
    \end{enumerate}
\end{proof}

The previous theorem identifies all rings for which $\Gamma'_{Reg}(R)$ avoids an induced subgraph isomorphic to the house graph. This classification naturally leads to a more precise description of the overall structure of the graphs. In particular, it becomes possible to determine exactly when $\Gamma'_{Reg}(R)$ belongs to several well-known families of graphs. The following theorem provides such a characterization.

\begin{theorem}
    Let $R$ be a ring. The following statements hold.
    \begin{enumerate}
        \item The graph $\Gamma'_{Reg}(R)$ is a path graph if and only if $R \cong \mathbb{F}_2 \times \mathbb{F}_2$ or $R \cong \mathbb{F}_2 \times \mathbb{F}_3$.
        \item The graph $\Gamma'_{Reg}(R)$ is a cycle graph if and only if $R \cong \mathbb{F}_3 \times \mathbb{F}_3$.
        \item The graph $\Gamma'_{Reg}(R)$ is a wheel graph if and only if $R \cong \mathbb{F}_2 \times \mathbb{F}_2 \times \mathbb{F}_2$.
    \end{enumerate}
\end{theorem}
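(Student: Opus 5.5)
The plan is to reduce all three statements to the classification in Theorem \ref{teo_subgrygtdkpunyarumah} via Lemma \ref{lemmasubgrafrumahCnWm}. Paths, cycles and wheels are all connected, and apart from the degenerate cases $P_1$ and $P_2$ they are nontrivial. So if $\Gamma'_{Reg}(R)$ is a path, cycle or wheel with at least three vertices, Theorem \ref{teo_grafvnterhubungnontr} gives $R \cong F_1 \times \dots \times F_n$ with $n \geq 2$.

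For the degenerate path cases I would use Theorem \ref{teo_grafvn_lap}:
\begin{itemize}
\item when $n=1$, the graph $\Gamma'_{Reg}(F)$ is a disjoint union of copies of $K_1$ and $K_2$;
\item it is $P_1$ exactly when $F \cong \mathbb{F}_2$;
\item it is $P_2$ exactly when $|F \setminus \{0\}| = 2$ with the two elements mutually inverse, which is impossible, since $|F|=3$ gives $\mathbb{F}_3$, where $2^{-1}=2$, so the graph is $2K_1$.
\end{itemize}
Hence the statement should be read for nontrivial graphs, or equivalently with $n \geq 2$, and I would note explicitly that $\mathbb{F}_2$ yields the trivial path $P_1$. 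With $n \geq 2$ in hand, Lemma \ref{lemmasubgrafrumahCnWm} shows that $\Gamma'_{Reg}(R)$ cannot contain an induced house graph. Theorem \ref{teo_subgrygtdkpunyarumah} then restricts $R$ to one of four rings: $\mathbb{F}_2\times\mathbb{F}_2$, $\mathbb{F}_2\times\mathbb{F}_3$, $\mathbb{F}_3\times\mathbb{F}_3$ and $\mathbb{F}_2^3$.

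The remaining step is to compute these four graphs explicitly and check which family each belongs to. Every entry in these rings is self-inverse or zero, so $a$ is regularized by $b$ exactly when $b$ agrees with $a$ on the support of $a$.
\begin{itemize}
\item For $\mathbb{F}_2\times\mathbb{F}_2$, the only edges are $(1,0)-(1,1)-(0,1)$, giving $P_3$.
\item For $\mathbb{F}_2\times\mathbb{F}_3$, the vertices are $(1,0),(0,1),(0,2),(1,1),(1,2)$. The vertex $(1,0)$ is adjacent to $(1,1)$ and $(1,2)$, while $(0,1)$ is adjacent only to $(1,1)$ and $(0,2)$ only to $(1,2)$. This gives the path $(0,1)-(1,1)-(1,0)-(1,2)-(0,2)$, that is, $P_5$.
\item For $\mathbb{F}_3\times\mathbb{F}_3$, the eight vertices form the $8$-cycle already exhibited in the tree theorem. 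Theorem \ref{teo_deggrafvn'} shows every vertex has degree $2$: units have degree $2^2-2=2$, and vertices with one zero have degree $2-3+3=2$. So the graph is exactly $C_8$.
\item For $\mathbb{F}_2^3$, the vertex $(1,1,1)$ regularizes every other vertex, so it is a hub of degree $6$. Each vertex with one or two zeros has degree $3$ by Theorem \ref{teo_deggrafvn'}: $2^2-3+2=3$ and $2-3+4=3$. Removing the hub leaves the $6$-cycle $(1,0,0)-(1,1,0)-(0,1,0)-(0,1,1)-(0,0,1)-(1,0,1)-(1,0,0)$, so the graph is $W_6$.
\end{itemize}

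Finally, the three families are mutually exclusive on these four graphs: $P_3$ and $P_5$ are not cycles or wheels, $C_8$ is not a path or wheel, and $W_6$ has a vertex of degree $6$. This gives both directions of each item.

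The main obstacle is bookkeeping rather than ideas. I must confirm that the $\mathbb{F}_2^3$ graph has no chords beyond the hub and the $6$-cycle, which is guaranteed by the degree count $3 = 2 + 1$. I must also handle the trivial field cases of path graphs honestly.
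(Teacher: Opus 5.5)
Your proposal is correct and follows the paper's route: the house-graph lemma rules out paths, cycles and wheels, the house-free classification (Theorem \ref{teo_subgrygtdkpunyarumah}) cuts $R$ down to four rings, and you compute the four graphs $P_3$, $P_5$, $C_8$, $W_6$. Your explicit treatment of $n=1$ is more careful than the paper, which applies that classification (stated only for $n \geq 2$) without justifying $n \geq 2$, and so misses that $\Gamma'_{Reg}(\mathbb{F}_2) \cong K_1 = P_1$ is a trivial path graph, an exception to item 1 as literally stated.
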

\begin{proof}
    Based on Lemma \ref{lemmasubgrafrumahCnWm}, if the graph $\Gamma'_{Reg}(R) \cong P_k$ for some natural number $k$, or $\Gamma'_{Reg}(R) \cong C_n$ for some natural number $n$, or $\Gamma'_{Reg}(R) \cong W_m$ for some natural number $m$, then the graph $\Gamma'_{Reg}(R)$ does not contain an induced subgraph that is isomorphic to the house graph. Using Theorem \ref{teo_subgrygtdkpunyarumah}, it follows that $R \cong \mathbb{F}_2 \times \mathbb{F}_2$, or $R \cong \mathbb{F}_2 \times \mathbb{F}_3$, or $R \cong \mathbb{F}_3 \times \mathbb{F}_3$, or $R \cong \mathbb{F}_2 \times \mathbb{F}_2 \times \mathbb{F}_2$. In this case, the graph $\Gamma'_{Reg}(\mathbb{F}_2 \times \mathbb{F}_2) \cong P_3$, the graph $\Gamma'_{Reg}(\mathbb{F}_2 \times \mathbb{F}_3) \cong P_5$, the graph $\Gamma'_{Reg}(\mathbb{F}_3 \times \mathbb{F}_3) \cong C_8$, and the graph $\Gamma'_{Reg}(\mathbb{F}_2 \times \mathbb{F}_2 \times \mathbb{F}_2) \cong W_6$.
\end{proof}

To further understand the structural complexity of $\Gamma'_{Reg}(R)$, it is useful to examine its planarity. In particular, certain properties of the component fields in the decomposition of $R$ can force the graph to be non-planar. The following lemma provides a condition under which this occurs.

\begin{lemma}
    Let $R \cong F_1 \times F_2 \times \dots \times F_n$, where $F_1, F_2, \dots, F_n$ are fields and $n \geq 2$. If there exist $F_i, F_j$ with $1 \leq i <j \leq n$ such that $|U''(F_i)|,|U''(F_j)| >0$, then $\Gamma'_{Reg}(R)$ is a non-planar graph.
\end{lemma}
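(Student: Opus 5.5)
The plan is to exhibit a subdivision of $K_{3,3}$ inside $\Gamma'_{Reg}(R)$ and then invoke Kuratowski's theorem. By hypothesis there are $a\in F_i$ with $a^{-1}\neq a$ and $b\in F_j$ with $b^{-1}\neq b$. For $u\in F_i$ and $v\in F_j$, write $(u,v)$ for the element of $F_1\times\dots\times F_n$ with $u$ in coordinate $i$, $v$ in coordinate $j$, and $0$ in every other coordinate. Every vertex used below has a nonzero entry in coordinate $i$ or $j$, so all of them lie in $V(\Gamma'_{Reg}(R))$.

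Multiplication is coordinatewise, and the zero coordinates satisfy $0\cdot y\cdot 0=0$ automatically. Hence $(x_1,x_2)$ is regularized by $(y_1,y_2)$ exactly when each $x_t$ is either $0$ or has $y_t=x_t^{-1}$. From this description I will check the following adjacencies directly.
\begin{enumerate}
\item $(a,0)$ is regularized by every $(a^{-1},v)$, so it is adjacent to $(a^{-1},b^{-1})$, to $(a^{-1},0)$ and to $(a^{-1},b)$.
\item $(0,b)$ is regularized by every $(u,b^{-1})$, so it is adjacent to $(a^{-1},b^{-1})$, to $(0,b^{-1})$ and to $(a,b^{-1})$.
\item $(a,b)$ regularizes $(a^{-1},b^{-1})$, $(a^{-1},0)$ and $(0,b^{-1})$, so it is adjacent to all three.
\item $(a^{-1},0)$ is regularized by $(a,b^{-1})$, so these two are adjacent.
\item $(0,b^{-1})$ is regularized by $(a^{-1},b)$, so these two are adjacent.
\end{enumerate}

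Now take the branch sets $A=\{(a,0),(0,b),(a,b)\}$ and $B=\{(a^{-1},b^{-1}),(a^{-1},0),(0,b^{-1})\}$. The edges listed above give seven of the nine pairs in $A\times B$ directly. The remaining two pairs are replaced by paths of length two:
$$(a^{-1},0)-(a,b^{-1})-(0,b), \qquad (0,b^{-1})-(a^{-1},b)-(a,0).$$
All eight vertices involved are pairwise distinct. This uses $a\neq a^{-1}$, $b\neq b^{-1}$ and that all of $a,b,a^{-1},b^{-1}$ are nonzero. In particular the two internal vertices $(a,b^{-1})$ and $(a^{-1},b)$ lie outside $A\cup B$ and differ from each other.

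It follows that $\Gamma'_{Reg}(R)$ contains a subdivision of $K_{3,3}$ as a (not necessarily induced) subgraph, so by Kuratowski's theorem $\Gamma'_{Reg}(R)$ is non-planar. The only real difficulty is choosing the six branch vertices. The naive choice of putting all elements with first coordinate $a$ on one side fails, because $(a,b)$ and $(a^{-1},b)$ are not adjacent when $b^{3}\neq b$. The mixed choice above, which uses the vertices with a zero coordinate, avoids this problem. Once the choice is made, every remaining step is a routine coordinatewise check.
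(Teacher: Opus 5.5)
Your proof is correct and follows the paper's route. The paper uses exactly the same eight vertices $(0,b^{\pm1}),(a^{\pm1},0),(a^{\pm1},b^{\pm1})$, with zeros in all other coordinates, and appeals to Kuratowski via a $K_{3,3}$ subdivision; the paper, however, only draws the induced subgraph (an $8$-cycle with four long chords) and asserts that the subdivision exists, whereas you name the branch sets and the two subdivided edges explicitly, using every edge of that subgraph except $(a,b^{-1})(a^{-1},b)$.
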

\begin{proof}
    If there exist $F_i, F_j$ with $1 \leq i <j \leq n$ such that $|U''(F_i)|,|U''(F_j)| >0$, then there exist $a,a^{-1} \in U''(F_i)$ and $b,b^{-1} \in U''(F_j)$ with $a \neq a^{-1}$ and $b \neq b^{-1}$. Since the order of the factors in the direct product of rings does not affect the structure, as any reordering yields an isomorphic ring, we may assume that $i=1$ and $j=2$. Consider the vertices in the set
    \begin{align*}
        X&=\{(0_{\mathbb{F}_1},b,0_{\mathbb{F}_3},\dots,0_{\mathbb{F}_n}),(0_{\mathbb{F}_1},b^{-1},,0_{\mathbb{F}_3},\dots,0_{\mathbb{F}_n}),(a,0_{\mathbb{F}_2},0_{\mathbb{F}_3},\dots,0_{\mathbb{F}_n}), \\& \quad \quad (a^{-1},0_{\mathbb{F}_2},0_{\mathbb{F}_3},\dots,0_{\mathbb{F}_n}),(a,b,0_{\mathbb{F}_3},\dots,0_{\mathbb{F}_n}),(a^{-1},b,0_{\mathbb{F}_3},\dots,0_{\mathbb{F}_n}), \\& \quad \quad (a,b^{-1},0_{\mathbb{F}_3},\dots,0_{\mathbb{F}_n}),(a^{-1},b^{-1},0_{\mathbb{F}_3},\dots,0_{\mathbb{F}_n})\}.
    \end{align*}
    The subgraph induced by $X$ in the graph $\Gamma'_{Reg}(R)$ is shown in the following illustration.
    \begin{figure}[H]
    \begin{center}
        \resizebox{0.8\textwidth}{!}{\begin{tikzpicture}  
				[scale=.9,auto=center,roundnode/.style={circle,fill=blue!40}]
				\node[roundnode] (a1) at (-6,1) {};  
				\node[roundnode] (a2) at (-2,1)  {};  
				\node[roundnode] (a3) at (2,1)  {};
                \node[roundnode] (a4) at (6,1)  {};
				\node[roundnode] (b1) at (-6,-1) {};
				\node[roundnode] (b2) at (-2,-1)  {};
				\node[roundnode] (b3) at (2,-1) {}; 
                \node[roundnode] (b4) at (6,-1) {}; 
                
                \begin{scriptsize}
                    \draw[color=black] (-6,1.8) node {$(0_{\mathbb{F}_1},b^{-1},,0_{\mathbb{F}_3},\dots,0_{\mathbb{F}_n})$};
                    \draw[color=black] (-2,1.8) node {$(a,b,0_{\mathbb{F}_3},\dots,0_{\mathbb{F}_n})$};
                    \draw[color=black] (2,1.8) node {$(a^{-1},0_{\mathbb{F}_2},0_{\mathbb{F}_3},\dots,0_{\mathbb{F}_n})$};
                    \draw[color=black] (6,1.8) node {$(a,b^{-1},0_{\mathbb{F}_3},\dots,0_{\mathbb{F}_n})$};%
                    \draw[color=black] (-6,-1.8) node {$(0_{\mathbb{F}_1},b,0_{\mathbb{F}_3},\dots,0_{\mathbb{F}_n})$};
                    \draw[color=black] (-2,-1.8) node {$(a^{-1},b^{-1},0_{\mathbb{F}_3},\dots,0_{\mathbb{F}_n})$};
                    \draw[color=black] (2,-1.8) node {$(a,0_{\mathbb{F}_2},0_{\mathbb{F}_3},\dots,0_{\mathbb{F}_n})$};
                    \draw[color=black] (6,-1.8) node {$(a^{-1},b,0_{\mathbb{F}_3},\dots,0_{\mathbb{F}_n})$};
                \end{scriptsize} 
                \draw (a1) -- (a2);
                \draw (a2) -- (a3);
				\draw (a3) -- (a4);
                \draw (b1) -- (b2);
                \draw (b2) -- (b3);
				\draw (b3) -- (b4);
                \draw (a1) -- (b1);
                \draw (a2) -- (b2);
				\draw (a3) -- (b3);
                \draw (a4) -- (b4);
                \draw (a1) -- (b4);
				\draw (b1) -- (a4);
			\end{tikzpicture}}\caption{A subgraph of the graph $\Gamma'_{Reg}(R)$}\label{subgrafGamReg1}
		\end{center}
\end{figure}
This subgraph contains a subgraph that is homeomorphic to the following graph $K_{3,3}$.
                
\end{proof}

The condition given in the lemma can be expressed more explicitly in terms of the sizes of the component fields. In particular, it leads to a simpler criterion for detecting non-planarity of $\Gamma'_{Reg}(R)$. The following corollary states this consequence.

\begin{corollary}
    Let $R \cong F_1 \times F_2 \times \dots \times F_n$, where $F_1, F_2, \dots, F_n$ are fields and $n \geq 2$. If there exist $F_i, F_j$ with $1 \leq i <j \leq n$ such that $F_i,F_j \ncong \mathbb{F}_2$ and $F_i,F_j \ncong \mathbb{F}_3$, then $\Gamma'_{Reg}(R)$ is a non-planar graph.
\end{corollary}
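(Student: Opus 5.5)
The plan is to reduce the corollary directly to the preceding lemma. That lemma shows that $\Gamma'_{Reg}(R)$ is non-planar as soon as two distinct factors $F_i,F_j$ satisfy $|U''(F_i)|,|U''(F_j)|>0$. So it suffices to show that a finite field $F$ with $F\ncong\mathbb{F}_2$ and $F\ncong\mathbb{F}_3$ has $|U''(F)|>0$. Equivalently, such a field has a nonzero element that is not its own inverse.

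To prove this, I would characterize the self-inverse units $U'(F)$. An element $a\in F\setminus\{0_F\}$ satisfies $a^{-1}=a$ exactly when $a^2=1_F$, that is, when $(a-1_F)(a+1_F)=0_F$. Since $F$ is a field, hence an integral domain, this forces $a=1_F$ or $a=-1_F$. Therefore $|U'(F)|\leq 2$, with $|U'(F)|=1$ in characteristic $2$. This bound was already used implicitly in the proof of Theorem \ref{teo_grafvn_titikpendant}.

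The nonzero elements of $F$ split as $U'(F)\cup U''(F)$, so $|U''(F)|=|F|-1-|U'(F)|\geq |F|-3$. Every finite field has prime-power order, so $F\ncong\mathbb{F}_2,\mathbb{F}_3$ means $|F|\geq 4$, because fields of the same finite order are isomorphic. Hence $|U''(F)|\geq 1$. This is the same observation as the remark in Lemma \ref{lemmamaksFgrafrumah} that $|F_i|>3$ yields $a_i$ with $a_i^{-1}\neq a_i$.

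Applying this to both $F_i$ and $F_j$ gives $|U''(F_i)|,|U''(F_j)|>0$ with $i<j$, and the preceding lemma then gives non-planarity. There is no real obstacle here. The only point needing care is the translation from ``$F\ncong\mathbb{F}_2,\mathbb{F}_3$'' to ``$|F|\geq 4$,'' which uses uniqueness of finite fields of a given order, together with the integral-domain argument bounding $|U'(F)|$.
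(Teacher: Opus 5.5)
Your proposal is correct and is the paper's intended argument: the paper gives no separate proof and presents the corollary as an immediate consequence of the preceding lemma. The only content is your observation that $F\ncong\mathbb{F}_2,\mathbb{F}_3$ forces $|F|\geq 4$, while $|U'(F)|\leq 2$ because $a^2=1_F$ implies $a=\pm 1_F$, so $|U''(F)|>0$ for both $F_i$ and $F_j$.
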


To relate the structure of $\Gamma'_{Reg}(R)$ more closely to its algebraic components, it is useful to focus on the subgraph induced by the idempotent elements of $R$. This perspective reveals a simpler underlying pattern that can be described in terms of products of the field $\mathbb{F}_2$. The following theorem formalizes this observation.

\begin{theorem}
    Let $R \cong F_1 \times F_2 \times \dots \times F_n$, where $F_1,F_2,\dots,F_n$ are fields and $n \geq 1$. The subgraph induced by $Id(R) \setminus \{0_R\}$ of the graph $\Gamma'_{Reg}(R)$ is isomorphic to the graph $\Gamma'_{Reg}((\mathbb{F}_2)^n)$.
\end{theorem}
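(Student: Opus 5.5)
We first pin down the vertex set. In $F_1 \times F_2 \times \dots \times F_n$ an element $(e_1,\dots,e_n)$ is idempotent if and only if $e_i^2=e_i$ for every $i$. Since each $F_i$ is a field, this forces $e_i \in \{0_{F_i},1_{F_i}\}$. Hence $Id(R)\setminus\{0_R\}$ consists exactly of the $2^n-1$ nonzero tuples with entries in $\{0,1\}$. This suggests the map $\varphi: (\mathbb{F}_2)^n\setminus\{0\} \to Id(R)\setminus\{0_R\}$ sending $(\epsilon_1,\dots,\epsilon_n)$ to the tuple whose $i$-th coordinate is $1_{F_i}$ when $\epsilon_i=1$ and $0_{F_i}$ when $\epsilon_i=0$. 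By the description above, $\varphi$ is a bijection.

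We then show that $\varphi$ preserves and reflects adjacency. Because multiplication is coordinatewise, $xyx=x$ holds if and only if $x_iy_ix_i=x_i$ for all $i$. For entries $x_i,y_i \in \{0,1\}$ in any field, the value of $x_iy_ix_i$ is the product of the $0/1$ values computed in the same way as in $\mathbb{F}_2$. In particular, $x_iy_ix_i=x_i$ holds exactly when $x_i=0$, or when $x_i=y_i=1$. This condition depends only on the $0/1$ pattern and not on the field $F_i$. Therefore, for distinct $a,b\in(\mathbb{F}_2)^n\setminus\{0\}$, we have $aba=a$ if and only if $\varphi(a)\varphi(b)\varphi(a)=\varphi(a)$, and likewise with $a$ and $b$ interchanged. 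Consequently, $ab\in E(\Gamma'_{Reg}((\mathbb{F}_2)^n))$ if and only if $\varphi(a)\varphi(b)\in E(\langle Id(R)\setminus\{0_R\}\rangle_{\Gamma'_{Reg}(R)})$. Note that this uses the fact that the subgraph is induced, so that no edges other than those between idempotents need to be considered.

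A cleaner way to phrase the same argument is as follows. The subring $\{0_{F_i},1_{F_i}\}$ need not be a subring isomorphic to $\mathbb{F}_2$ when $\mathrm{char}\,F_i\neq 2$, since it is not closed under addition. It is, however, closed under multiplication, and the multiplicative monoid $\{0,1\}$ is the same in every field. Since adjacency in $\Gamma'_{Reg}$ involves only products, $\varphi$ being a multiplicative monoid isomorphism onto $Id(R)$ suffices.

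The main obstacle is precisely this subtlety: one cannot simply invoke the earlier isomorphism-invariance theorem, because $Id(R)$ is not a ring copy of $(\mathbb{F}_2)^n$. The proof must therefore note explicitly that the adjacency relation is purely multiplicative and that $\varphi$ respects multiplication. Once that is recorded, the coordinatewise verification above is routine.
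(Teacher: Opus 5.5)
Your proposal is correct and follows essentially the same route as the paper. Both arguments use the coordinatewise $0/1$ bijection between the nonzero idempotents of $R$ and $(\mathbb{F}_2)^n\setminus\{0\}$, and both check adjacency one coordinate at a time; your condition ``$x_i=0$ or $x_i=y_i=1$'' is exactly the paper's $\delta(a_i)\le\delta(b_i)$. Your remark that only the multiplicative monoid structure of $\{0,1\}$ matters, which is why the ring-isomorphism invariance theorem cannot simply be cited, is a useful clarification that the paper leaves implicit.
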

\begin{proof}
    It is known that $Id(R) \setminus \{0_R\}=\{(a_1,a_2,\dots,a_n) \mid a_i \in \{0_{F_i}, 1_{F_i}\}\} \setminus \{(0_{F_1}, 0_{F_2}, \dots, 0_{F_n})\}$. Define a bijection $\phi: Id(R) \setminus \{0_R\} \to (\mathbb{F}_2)^n \setminus \{0_{(\mathbb{F}_2)^n}\}$, by $\phi((a_1,a_2,\dots,a_n))=(f_1(a_1),f_2(a_2),\dots,f_n(a_n))$ where $f_i(a_i)=\begin{cases}
        0_{\mathbb{F}_2}, &\text{if } a_i=0_{F_i}\\
        1_{\mathbb{F}_2}, &\text{if } a_i=1_{F_i}
    \end{cases}$ for each $1 \leq i \leq n$. Throughout this work, for any field $F$, we use the notation $\delta(0_F)=0 \in \mathbb{Z}$ and $\delta(1_F)=1 \in \mathbb{Z}$.
    For any distinct vertices $(a_1,a_2,\dots,a_n), (b_1,b_2,\dots,b_n) \in Id(R) \setminus \{0_R\}$, we have
    \begin{align*}
        &(a_1,a_2,\dots,a_n)(b_1,b_2,\dots,b_n) \in E(\Gamma'_{Reg}(R))\\
        &\iff a_ib_ia_i=a_i \text{ for each } 1 \leq i \leq n, \text{ or } b_ia_ib_i=b_i \text{ for each } 1 \leq i \leq n\\
        &\iff \delta(a_i) \leq \delta(b_i) \text{ for each } 1 \leq i \leq n, \text{ or } \delta(b_i) \leq \delta(a_i) \text{ for each } 1 \leq i \leq n\\
        &\iff \delta(f_i(a_i)) \leq \delta(f_i(b_i)) \text{ for each } 1 \leq i \leq n, \\
        & \qquad \text{ or } \delta(f_i(b_i)) \leq \delta(f_i(a_i)) \text{ for each } 1 \leq i \leq n\\
        &\iff f_i(a_i)f_i(b_i)f_i(a_i)=f_i(a_i) \text{ for each } 1 \leq i \leq n, \\& \qquad \text{ or } f_i(b_i)f_i(a_i)f_i(b_i)=f_i(b_i) \text{ for each } 1 \leq i \leq n\\
        &\iff \phi((a_1,a_2,\dots,a_n))\phi((b_1,b_2,\dots, b_n)) \in E(\Gamma'_{Reg}((\mathbb{F}_2)^n)).
    \end{align*}
    Thus, $\phi$ is a graph isomorphism. In other words, $\langle Id(R) \setminus \{0_R\} \rangle_{\Gamma'_{Reg}(R)} \cong \Gamma'_{Reg}((\mathbb{F}_2)^n)$.
\end{proof}

To determine the structure of $\Gamma'_{Reg}(R)$ for an arbitrary finite commutative von Neumann regular ring $R$, we begin by examining how the graph behaves under direct products with fields. This approach allows the structure to be built step by step from simpler components. The following theorem provides a key construction that describes $\Gamma'_{Reg}(R \times F)$ in terms of $\Gamma'_{Reg}(R \times \mathbb{F}_2)$ and certain graph operations.

\begin{theorem}\label{metodeF2keF}
    Let $R$ be a finite commutative regular ring. Let $G=\Gamma'_{Reg}(R \times \mathbb{F}_2)$, $A=\{(a,1_{\mathbb{F}_2}) \mid a \in R, a^3=a\}$, and $J=\{(u,0_{\mathbb{F}_2}) \mid u \in R^*\}$, where $R^*=R \setminus \{0_R\}$. For any field $F$ with $|F|=n=p^k$, where $p$ is a prime number and $k$ is a positive integer, the following holds
    \begin{align*}
        \Gamma'_{Reg}(R \times F) \cong \begin{cases}
            \bigsqcup(G, \underbrace{AmB_A(G \mid J), \dots ,AmB_A(G \mid J)}_{\frac{n-2}{2}} \mid J) , &\text{if } p=2\\
            \bigsqcup(G, G, \underbrace{AmB_A(G \mid J), \dots ,AmB_A(G \mid J)}_{\frac{n-3}{2}} \mid J) ,  &\text{otherwise}.
        \end{cases}
    \end{align*}
\end{theorem}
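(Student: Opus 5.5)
The plan is to compare $\Gamma'_{Reg}(R\times F)$ and the amalgamated graph directly on vertex sets, exploiting the fact that regularity in a direct product is checked coordinatewise. The nonzero elements of $R\times F$ split into the set $J_F=\{(u,0_F)\mid u\in R^*\}$ and, for each $c\in F^*$, the \emph{layer} $L_c=\{(r,c)\mid r\in R\}$. Note that $r=0_R$ is allowed in a layer. We then partition $F^*$ into the self-inverse units $U'(F)$ and the pairs $\{c,c^{-1}\}$ with $c\neq c^{-1}$. The set $U'(F)$ equals $\{1_F\}$ when $p=2$ and $\{1_F,-1_F\}$ otherwise, since $c^2=1$ forces $c=\pm1$ in a field. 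This leaves $\frac{n-2}{2}$ pairs, respectively $\frac{n-3}{2}$ pairs, which is exactly the number of copies of $G$ and of $AmB_A(G\mid J)$ in the statement. The proof then amounts to four adjacency computations.

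\emph{Step 1 (edges inside $J_F$ and between $J_F$ and the layers).} For $(u,0_F),(v,0_F)\in J_F$, adjacency holds iff $uvu=u$ or $vuv=v$. This is exactly adjacency of $(u,0_{\mathbb{F}_2}),(v,0_{\mathbb{F}_2})$ in $G$, so $J_F$ induces a copy of $\langle J\rangle_G$. For $(u,0_F)$ and $(s,d)$ with $d\neq 0_F$, we have $(s,d)(u,0_F)(s,d)=(sus,0_F)\neq(s,d)$. Hence adjacency holds iff $usu=u$. This condition does not depend on $d$, and it coincides with adjacency of $(u,0_{\mathbb{F}_2})$ and $(s,1_{\mathbb{F}_2})$ in $G$. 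Thus every layer sees $J_F$ exactly as $G-J$ sees $J$.

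\emph{Step 2 (edges between different layers).} For $c,d\in F^*$ and $(r,c),(s,d)$, the second coordinate requires $cdc=c$ or $dcd=d$, i.e.\ $d=c^{-1}$. So there are no edges between $L_c$ and $L_d$ unless $d=c^{-1}$. Consequently each self-inverse layer and each pair of mutually inverse layers is attached to the rest only through $J_F$. This is precisely the shape of a $J$-amalgamation.

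\emph{Step 3 (self-inverse layers).} If $c^2=1_F$, then $ccc=c$, so $(r,c)\sim(s,c)$ iff $r\neq s$ and ($rsr=r$ or $srs=s$). The map $(r,c)\mapsto(r,1_{\mathbb{F}_2})$, together with the identity on $J$, therefore gives an isomorphism from $\langle J_F\cup L_c\rangle$ onto $G$. This yields one copy of $G$ when $p=2$ and two copies otherwise.

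\emph{Step 4 (inverse pairs).} If $c\neq c^{-1}$, then $L_c$ and $L_{c^{-1}}$ are each independent sets, since $ccc\neq c$. For the cross edges, $(r,c)\sim(s,c^{-1})$ iff $rsr=r$ or $srs=s$. When $r\neq s$, this is exactly the edge $(r,1)(s,1)$ of $G-J$. When $r=s$, the vertices are still distinct, and the condition becomes $r^3=r$, i.e.\ $(r,1_{\mathbb{F}_2})\in A$. Sending $(r,c)\mapsto((r,1_{\mathbb{F}_2}),0)$ and $(r,c^{-1})\mapsto((r,1_{\mathbb{F}_2}),1)$ then matches $E(B(G-J))$ together with the extra edges $\{(v,0)(v,1)\mid v\in A\}$. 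Step 1 supplies the remaining edges $u(v,0),u(v,1)$ and $E(\langle J\rangle_G)$. Hence $\langle J_F\cup L_c\cup L_{c^{-1}}\rangle\cong AmB_A(G\mid J)$.

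The main obstacle is the bookkeeping in Step 4. One must check that the vertex $(0_R,c)$ is covered, since $G-J$ contains $(0_R,1_{\mathbb{F}_2})$ and $0\in A$. One must also check that the diagonal edges are exactly those indexed by $A$, and that the identifications across all pieces agree on $J$, so that the subgraph amalgamation is well defined. Once Steps 1–4 are established, the union over all layers reproduces every vertex and edge of $\Gamma'_{Reg}(R\times F)$ exactly once, and the stated isomorphism follows.
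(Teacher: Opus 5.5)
Your proposal is correct and follows essentially the same route as the paper. The paper likewise splits the vertices into $U_0$, the self-inverse blocks $K_a$ and the inverse-pair blocks $L_i$, and builds explicit isomorphisms onto $G$ and onto $AmB_A(G\mid J)$, with the diagonal edges given exactly by $r^3=r$. It then shows there are no edges between different blocks, since $pq\neq 1_F$, and counts $|U'(F)|\in\{1,2\}$.
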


\begin{proof}
    Let $R$ be a finite commutative regular ring. It is known that $V(G)=V(\Gamma'_{Reg}(R \times \mathbb{F}_2))=\{(u,0_{\mathbb{F}_2}) \mid u \in R^*\} \cup \{(u,1_{\mathbb{F}_2}) \mid u \in R\}$. Take any field $F$ with $|F|=n=p^k$, where $p$ is a prime number and $k$ is a positive integer. Let $H=\Gamma'_{Reg}(R \times F)$, $U_0=\{(u,0_F) \mid u \in R^*\}$, and $U''(F)=\{b_i, c_i \mid 1 \leq i \leq \tfrac{n-|U'(F)|-1}{2}\}$ with $b_ic_i=1_F$ for each $1 \leq i \leq \tfrac{n-|U'(F)|-1}{2}$. Define $K_a = \{(u,a) \mid u \in R\}$ for each $a \in U'(F)$ and $L_i=\{(u,b_i),(u,c_i) \mid u \in R\}$ for each $1 \leq i \leq \tfrac{n-|U'(F)|-1}{2}$. We will show that $\langle U_0 \cup K_a \rangle_H \cong G$ for each $a \in U'(F)$, and $\langle U_0 \cup L_i \rangle_H \cong AmB_A(G \mid J)$ for each $1 \leq i \leq \tfrac{n-|U'(F)|-1}{2}$. Furthermore, it remains to show that for every $x \in X$ and $y \in Y$ with $X,Y \in \{K_a, L_i \mid a \in U'(F), 1 \leq i \leq \tfrac{n-|U'(F)|-1}{2}\}$ and $X \neq Y$, we have $xy \notin E(\Gamma'_{Reg}(R \times F))$. 

    \begin{enumerate}
        \item First, for any $a \in U'(F)$, define $\psi: U_0 \cup K_a \to V(G)$ by $\psi((u,0_F))=(u,0_{\mathbb{F}_2})$ and $\psi((u,a))=(u,1_{\mathbb{F}_2})$. Then $\psi$ is a bijection and for every $(u,x),(v,y) 
        \in U_0 \cup K_a$ with $\psi((u,x))=(u,x')$ and $\psi((v,y))=(v,y')$, we have
        \begin{align*}
            (u,x)(v,y) \in &E(\langle U_0 \cup K_a \rangle_H) \iff (u,x)(v,y) \in E(H)\\
            &\iff (uvu=u \wedge xyx=x) \vee (vuv=v \wedge yxy=y)\\
            &\iff (uvu=u \wedge (x,y) \in \{(0_F,0_F),(0_F,a),(a,a)\}) \\
            & \qquad \vee (vuv=v \wedge (y,x) \in \{(0_F,0_F),(0_F,a),(a,a)\})\\
            &\iff (uvu=u \wedge (x',y') \in \{(0_{\mathbb{F}_2},0_{\mathbb{F}_2}),(0_{\mathbb{F}_2},1_{\mathbb{F}_2}),(1_{\mathbb{F}_2},1_{\mathbb{F}_2})\}) \\
            & \qquad \vee (vuv=v \wedge (y,x) \in \{(0_{\mathbb{F}_2},0_{\mathbb{F}_2}),(0_{\mathbb{F}_2},1_{\mathbb{F}_2}),(1_{\mathbb{F}_2},1_{\mathbb{F}_2})\})\\
            &\iff (uvu=u \wedge x'y'x'=x') \vee (vuv=v \wedge y'x'y'=y')\\
            &\iff \psi((u,x))\psi((v,y))=(u,x')(v,y') \in E(G).
        \end{align*}
        Hence, $\psi$ is a graph isomorphism. Therefore, $\langle U_0 \cup K_a \rangle_H \cong G$.

        \item Second, for each $1 \leq i \leq \tfrac{n-|U'(F)|-1}{2}$, define $\phi: U_0 \cup L_i \to V(AmB_A(G \mid J))$ by $\phi((u,0_F))=(u,0_{\mathbb{F}_2})$, $\phi((u,b_i))=((u,1_{\mathbb{F}_2}),0)$, and $\phi((u,c_i))=((u,1_{\mathbb{F}_2}),1)$. Observe that
        \begin{align*}
            E(AmB_A(G \mid J))=E(B(G-J)) \cup W \cup  E(\langle J \rangle_G) \cup T
        \end{align*}
        where
        \begin{align*}
            E(B(G-J))&=\{((u,1_{\mathbb{F}_2}),0)((v,1_{\mathbb{F}_2}),1) \mid (u,1_{\mathbb{F}_2})(v,1_{\mathbb{F}_2}) \in E(G)\}\\
            &=\{((u,1_{\mathbb{F}_2}),0)((v,1_{\mathbb{F}_2}),1) \mid  u \neq v,uvu=u \vee vuv=v\}\\
            E(\langle J \rangle_G)&=\{(u,0_{\mathbb{F}_2})(v,0_{\mathbb{F}_2}) \mid uvu=u \vee vuv=v\}\\
            W &=\{((v,1_{\mathbb{F}_2}),0)((v,1_{\mathbb{F}_2}),1) \mid (v,1_{\mathbb{F}_2}) \in A\}\\
            &=\{((v,1_{\mathbb{F}_2}),0)((v,1_{\mathbb{F}_2}),1) \mid v^3=v\}\\
            T&=\{(u,0_{\mathbb{F}_2})((v,1_{\mathbb{F}_2}),0),(u,0_{\mathbb{F}_2})((v,1_{\mathbb{F}_2}),1) \mid\\ 
            & \qquad \quad (u,0_{\mathbb{F}_2}) \in J, (v,1_{\mathbb{F}_2}) \in V(G-J), (u,0_{\mathbb{F}_2})(v,1_{\mathbb{F}_2}) \in E(G)\}\\
            &=\{(u,0_{\mathbb{F}_2})((v,1_{\mathbb{F}_2}),0),(u,0_{\mathbb{F}_2})((v,1_{\mathbb{F}_2}),1) \mid u,v \in R, uvu=u\}.
        \end{align*}
        It follows that $\phi$ is a bijection and for every $(u,x),(v,y) 
        \in U_0 \cup L_i$ we have
        \begin{align*}
            &(u,x)(v,y) \in E(\langle U_0 \cup L_i \rangle_H) \iff (u,x)(v,y) \in E(H)\\
            &\iff (uvu=u \wedge xyx=x) \vee (vuv=v \wedge yxy=y)\\
            &\iff (uvu=u \wedge (x,y) \in \{(0_F,0_F),(0_F,b_i),(0_F,c_i),(b_i,c_i),(c_i,b_i)\}) \\
            & \qquad \quad \vee (vuv=v \wedge(y,x) \in \{(0_F,0_F),(0_F,b_i),(0_F,c_i),(b_i,c_i),(c_i,b_i)\})\\
            &\iff (uvu=u \vee vuv=v) \wedge (x,y)=(0_F,0_F)\\
            & \qquad \quad \vee (uvu=u \wedge (x,y) \in \{(0_F,b_i),(0_F,c_i),(b_i,c_i),(c_i,b_i)\}) \\
            & \qquad \quad \vee (vuv=v \wedge(y,x) \in \{(0_F,b_i),(0_F,c_i),(b_i,c_i),(c_i,b_i)\})\\
            &\iff [(uvu=u \vee vuv=v) \wedge (x,y)=(0_F,0_F)]\\
            & \qquad \quad \vee [(uvu=u \vee vuv=v) \wedge (x,y) \in \{(b_i,c_i),(c_i,b_i)\}]\\
            & \qquad \quad \vee (uvu=u \wedge (x,y) \in \{(0_F,b_i),(0_F,c_i)\}) \\
            & \qquad \quad \vee (vuv=v \wedge(y,x) \in \{(0_F,b_i),(0_F,c_i)\}).
        \end{align*}
        Consider the following cases.
        \begin{enumerate}
            \item The condition $uvu=u$ or $vuv=v$, and $(x,y)=(0_F,0_F)$ holds if and only if $\phi((u,x))=(u,0_{\mathbb{F}_2})$ and $\phi((v,y))=(v,0_{\mathbb{F}_2})$ with $uvu=u$ or $vuv=v$. This is equivalent to $\phi((u,x))\phi((v,y)) \in E(\langle J \rangle_G)$.
            \item The condition $uvu=u$ or $vuv=v$, and $(x,y) \in \{(b_i,c_i),(c_i,b_i)\}$ holds if and only if $\phi((u,x))=((u,1_{\mathbb{F}_2}),0)$ and $\phi((v,y))=((v,1_{\mathbb{F}_2}),1)$, or $\phi((u,x))=((u,1_{\mathbb{F}_2}),1)$ and $\phi((v,y))=((v,1_{\mathbb{F}_2}),0)$ with $uvu=u$ or $vuv=v$. This is equivalent to $\phi((u,x))\phi((v,y)) \in E(B(G-J)) \cup W$.
            \item If $uvu=u$ and $(x,y) \in \{(0_F,b_i),(0_F,c_i)\}$, then $\phi((u,x))=(u,0_{\mathbb{F}_2})$, and $\phi((v,y))=((v,1_{\mathbb{F}_2}),0)$ or $\phi((v,y))=((v,1_{\mathbb{F}_2}),1)$, with $uvu=u$. Hence, $\phi((u,x))\phi((v,y)) \in T$. Similarly, if $vuv=v$ and $(y,x) \in \{(0_F,b_i),(0_F,c_i)\}$, then $\phi((u,x))=((u,1_{\mathbb{F}_2}),0)$ or $\phi((u,x))=((u,1_{\mathbb{F}_2}),1)$, and $\phi((v,y))=(v,0_{\mathbb{F}_2})$, with $vuv=v$. Hence, $\phi((u,x))\phi((v,y)) \in T$.
            \item If $\phi((u,x))\phi((v,y)) \in T$, then there are two possibilities. First, $\phi((u,x))=(u,0_{\mathbb{F}_2})$, and $\phi((v,y))=((v,1_{\mathbb{F}_2}),0)$ or $\phi((v,y))=((v,1_{\mathbb{F}_2}),1)$, with $uvu=u$. Second, $\phi((u,x))=((u,1_{\mathbb{F}_2}),0)$ or $\phi((u,x))=((u,1_{\mathbb{F}_2}),1)$, and $\phi((v,y))=(v,0_{\mathbb{F}_2})$, with $vuv=v$.
        \end{enumerate}
        Therefore, $(u,x)(v,y) \in E(\langle U_0 \cup L_i \rangle_H)$ if and only if $\phi(u,x)\phi(v,y) \in E(AmB_A(G \mid J))$, and hence $\phi$ is a graph isomorphism. Thus, $\langle U_0 \cup L_i \rangle_H \cong AmB_A(G \mid J)$.

        \item Third, take any $x \in X$ and $y \in Y$ with $X,Y \in \{K_a, L_i \mid a \in U'(F), 1 \leq i \leq \tfrac{n-|U'(F)|-1}{2}\}$ and $X \neq Y$. Then $\{x,y\} \in \{\{(u,p),(v,q)\} \mid u,v \in R, p,q \in U(F), pq \neq 1_F\}$. Clearly, $pqp \neq p \iff pq \neq 1_F \iff qpq \neq q$. Hence, $xyx \neq x$ and $yxy \neq y$. Therefore, $xy \notin E(\Gamma'_{Reg}(R \times F))$.
    \end{enumerate}

    Therefore, we obtain
    \begin{align*}
        \Gamma'_{Reg}(R \times F) \cong \bigsqcup\{\underbrace{G, \dots, G}_{|U'(F)|}, \underbrace{AmB_A(G \mid J), \dots ,AmB_A(G \mid J)}_{\frac{n-|U'(F)|-1}{2}} \mid J \}.
    \end{align*}

    In this case, if $p=2$, then $|U'(F)|=1$ and $|U''(F)|=n-2$. If $p \neq 2$, then $|U'(F)|=2$ and $|U''(F)|=n-3$. Hence,
    \begin{align*}
        \Gamma'_{Reg}(R \times F) \cong \begin{cases}
            \bigsqcup(G, \underbrace{AmB_A(G \mid J), \dots ,AmB_A(G \mid J)}_{\frac{n-2}{2}} \mid J) , &\text{if } p=2\\
            \bigsqcup(G, G, \underbrace{AmB_A(G \mid J), \dots ,AmB_A(G \mid J)}_{\frac{n-3}{2}} \mid J) ,  &\text{otherwise}.
        \end{cases}
    \end{align*}
\end{proof}

We will consider some examples here.
    The structure of the graph $\Gamma'_{Reg}(\mathbb{F}_2 \times \mathbb{F}_3 \times \mathbb{F}_5)$ can be obtained through the following algorithm:
    \begin{enumerate}
        \item Let the graph $G_1 = \Gamma'_{Reg}(\mathbb{F}_2 \times \mathbb{F}_2 \times \mathbb{F}_2)$. 
        \item Let $J_1=\{(0_{\mathbb{F}_2},0_{\mathbb{F}_2},1_{\mathbb{F}_2}),(1_{\mathbb{F}_2},0_{\mathbb{F}_2},0_{\mathbb{F}_2}),(1_{\mathbb{F}_2},0_{\mathbb{F}_2},1_{\mathbb{F}_2})\}$. Construct the graph $G_2=\bigsqcup(G_1,G_1 \mid J_1)$.
        \item Let $J_2=\{(1_{\mathbb{F}_2}, 0_{\mathbb{F}_3}, 0_{\mathbb{F}_2}), (1_{\mathbb{F}_2}, 1_{\mathbb{F}_3}, 0_{\mathbb{F}_2}), (1_{\mathbb{F}_2}, 2_{\mathbb{F}_3}, 0_{\mathbb{F}_2}),(0_{\mathbb{F}_2}, 1_{\mathbb{F}_3}, 0_{\mathbb{F}_2}), $ $(0_{\mathbb{F}_2}, 2_{\mathbb{F}_3}, 0_{\mathbb{F}_2})\}$, and \\$A=\{(1_{\mathbb{F}_2}, 0_{\mathbb{F}_3}, 1_{\mathbb{F}_2}), (1_{\mathbb{F}_2}, 1_{\mathbb{F}_3}, 1_{\mathbb{F}_2}), (1_{\mathbb{F}_2}, 2_{\mathbb{F}_3}, 1_{\mathbb{F}_2}),(0_{\mathbb{F}_2}, 1_{\mathbb{F}_3}, 1_{\mathbb{F}_2}), (0_{\mathbb{F}_2}, 2_{\mathbb{F}_3}, 1_{\mathbb{F}_2}), (0_{\mathbb{F}_2}, 0_{\mathbb{F}_3}, 1_{\mathbb{F}_2})\}$. Construct the graph $G_3=\bigsqcup(G_2,G_2, AmB_A(G_2 \mid J_2) \mid J_2)$.
        \item The graph $\Gamma'_{Reg}(\mathbb{F}_2 \times \mathbb{F}_3 \times \mathbb{F}_5)$ is isomorphic to $G_3$.
    \end{enumerate}

    The structure of the graph $\Gamma'_{Reg}(\mathbb{F}_3 \times \mathbb{F}_3 \times \mathbb{F}_3)$ can be obtained through the following algorithm:
    \begin{enumerate}
        \item Let the graph $G_1 = \Gamma'_{Reg}(\mathbb{F}_2 \times \mathbb{F}_2 \times \mathbb{F}_2)$. 
        \item Let $J_1=\{(0_{\mathbb{F}_2},0_{\mathbb{F}_2},1_{\mathbb{F}_2}),(0_{\mathbb{F}_2},1_{\mathbb{F}_2},0_{\mathbb{F}_2}),(0_{\mathbb{F}_2},1_{\mathbb{F}_2},1_{\mathbb{F}_2})\}$. Construct the graph $G_2=\bigsqcup(G_1,G_1 \mid J_1)$.
        \item Let $J_2=\{(1_{\mathbb{F}_3}, 0_{\mathbb{F}_2}, 0_{\mathbb{F}_2}), (2_{\mathbb{F}_3}, 0_{\mathbb{F}_2}, 0_{\mathbb{F}_2}), (0_{\mathbb{F}_3}, 0_{\mathbb{F}_2}, 1_{\mathbb{F}_2}),(1_{\mathbb{F}_3}, 0_{\mathbb{F}_2}, 1_{\mathbb{F}_2}), $ $(2_{\mathbb{F}_3}, 0_{\mathbb{F}_2}, 1_{\mathbb{F}_2})\}$. Construct the graph $G_3=\bigsqcup(G_2,G_2 \mid J_2)$.
        \item Let $J_3=\{(1_{\mathbb{F}_3}, 0_{\mathbb{F}_3}, 0_{\mathbb{F}_2}), (2_{\mathbb{F}_3}, 0_{\mathbb{F}_3}, 0_{\mathbb{F}_2}), (0_{\mathbb{F}_3}, 1_{\mathbb{F}_3}, 0_{\mathbb{F}_2}),(0_{\mathbb{F}_3}, 2_{\mathbb{F}_3}, 0_{\mathbb{F}_2}), $ $(1_{\mathbb{F}_3}, 1_{\mathbb{F}_3}, 0_{\mathbb{F}_2}),(1_{\mathbb{F}_3}, 2_{\mathbb{F}_3}, 0_{\mathbb{F}_2}),$ $(2_{\mathbb{F}_3}, 1_{\mathbb{F}_3}, 0_{\mathbb{F}_2}), (2_{\mathbb{F}_3}, 2_{\mathbb{F}_3}, 0_{\mathbb{F}_2})\}$. Construct the graph $G_4=\bigsqcup(G_3,G_3 \mid J_3)$.
        \item The graph $\Gamma'_{Reg}(\mathbb{F}_3 \times \mathbb{F}_3 \times \mathbb{F}_3)$ is isomorphic to $G_4$.
    \end{enumerate}

The construction given in the theorem can be applied iteratively to describe the structure of $\Gamma'_{Reg}(R)$ for any finite commutative von Neumann regular ring. In particular, this approach reduces the problem to understanding the graph associated with $(\mathbb{F}_2)^n$. The following corollary summarizes this observation.

\begin{corollary}
    Let $R \cong F_1 \times F_2 \times \dots \times F_n$, where $F_1, F_2, \dots, F_n$ are fields and $n \geq 1$. The structure of the graph $\Gamma'_{Reg}(R)$ can be obtained from the graph $\Gamma'_{Reg}((\mathbb{F}_2)^n)$ by applying an algorithm analogous to the one described in the example.
\end{corollary}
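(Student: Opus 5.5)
We argue by induction on the number $m$ of factors that are not isomorphic to $\mathbb{F}_2$, and the engine of the induction is Theorem \ref{metodeF2keF}. Isomorphic rings give isomorphic graphs, and permuting the factors of a direct product yields an isomorphic ring. So we may reorder and write $R \cong (\mathbb{F}_2)^{n-m} \times F_{1}\times\dots\times F_{m}$, where $|F_j| = n_j = p_j^{k_j} > 2$. For $0\le t\le m$ set $R_t=(\mathbb{F}_2)^{n-m}\times F_1\times\dots\times F_t\times(\mathbb{F}_2)^{m-t}$. Then $R_0=(\mathbb{F}_2)^n$ and $R_m\cong R$ after reordering. The claim is that $\Gamma'_{Reg}(R_m)$ is obtained from $\Gamma'_{Reg}(R_0)$ by $m$ explicit graph operations, one for each non-binary field, exactly as in the worked examples.

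For the inductive step from $t-1$ to $t$, write $R_{t-1}\cong S\times\mathbb{F}_2$ and $R_t\cong S\times F_t$. Here $S$ is the finite commutative regular ring obtained by deleting the $\mathbb{F}_2$ sitting in position $t$ (moved to the last coordinate). Apply Theorem \ref{metodeF2keF} with $R:=S$ and $G:=\Gamma'_{Reg}(S\times\mathbb{F}_2)\cong\Gamma'_{Reg}(R_{t-1})$. This expresses $\Gamma'_{Reg}(R_t)$ as a $J$-amalgamation of copies of $G$ and of $AmB_A(G\mid J)$, where:
\begin{itemize}
\item the number of copies is determined only by $n_t$ and the parity of $p_t$;
\item $J=\{(u,0)\mid u\in S^*\}$;
\item $A=\{(a,1)\mid a^3=a\}$.
\end{itemize}
Both $J$ and $A$ are read off directly as vertex subsets of the previously constructed graph.

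The point that needs care is that $J$ and $A$ are defined in terms of the ring $S$, not purely in terms of the abstract graph $G$. To turn this into an algorithm on graphs, we carry along, at every stage, the labelling of the vertices by ring elements. Concretely, the isomorphism produced at step $t$ (the maps $\psi$ and $\phi$ in the proof of Theorem \ref{metodeF2keF}) sends $(u,x)\in S\times F_t$ to a labelled vertex. Composing these maps gives an explicit bijection $V(\Gamma'_{Reg}(R_t))\to R_t^*$. This bijection lets us identify in the current graph:
\begin{itemize}
\item the set $J$ of vertices whose $t$-th coordinate is $0$;
\item the set $A$ of vertices whose $t$-th coordinate is $1$ and which satisfy $a^3=a$.
\end{itemize}
The condition $a^3=a$ can be checked coordinatewise, since $a^3=a$ holds exactly when each nonzero coordinate is self-inverse.

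We also need the amalgamation to be well defined, so that the result is uniquely determined. This requires the induced subgraphs $\langle J\rangle$ to coincide across all copies, which holds because every copy in Theorem \ref{metodeF2keF} shares the same vertex set $U_0$ with the same induced edges.

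The main obstacle is this bookkeeping: we must show that the ring labels and the sets $J$, $A$ are always recoverable at each stage. I would handle it by stating and proving, inductively, a strengthened claim. The strengthened claim is that $\Gamma'_{Reg}(R_t)$ together with its vertex labelling by $R_t^*$ is obtained from the labelled graph $\Gamma'_{Reg}((\mathbb{F}_2)^n)$ by the procedure above. The base case is trivial. The inductive step is Theorem \ref{metodeF2keF} combined with the explicit form of the isomorphisms $\psi$ and $\phi$. Once $t=m$ is reached, the ring-isomorphism invariance of $\Gamma'_{Reg}$ completes the proof.
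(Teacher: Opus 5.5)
Your proposal is correct and takes the same route as the paper. The paper's only justification for this corollary is the remark that Theorem \ref{metodeF2keF} can be applied iteratively, replacing one $\mathbb{F}_2$ factor at a time as in the two worked examples. Your explicit carrying of ring labels on the vertices is what those examples do implicitly when they list $J_1$, $J_2$ and $A$ as tuples, so it makes precise a step the paper leaves informal.

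One edge case deserves a word: if $n=1$ and $F_1\not\cong\mathbb{F}_2$, your $S$ is the zero ring. The construction still works there, giving $J=\emptyset$, $G=K_1$ and $AmB_A(G\mid J)=K_2$, which agrees with Theorem \ref{teo_grafvn_lap}. Alternatively, you can simply cite Theorem \ref{teo_grafvn_lap} for that case.
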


In this setting, the structure of the graph $\Gamma'_{Reg}((\mathbb{F}_2)^n)$ plays a crucial role in the algorithm used to determine the structure of $\Gamma'_{Reg}(R)$ for an arbitrary finite commutative von Neumann regular ring. Indeed, for every $a \in (\mathbb{F}_2)^n$, we have $a^2 = a$, and hence $a1_{(\mathbb{F}_2)^n}a = a^2 = a$. This implies that each element $a$ is adjacent to $1_{(\mathbb{F}_2)^n}$ in the graph $\Gamma'_{Reg}((\mathbb{F}_2)^n)$. As a result, the graph can be decomposed into the join of the vertex $1_{(\mathbb{F}_2)^n}$ and the subgraph induced by the remaining vertices. The following theorem describes the relationship between the structure of $\Gamma'_{Reg}((\mathbb{F}_2)^n)$ and the inclusion ideal graph $In((\mathbb{F}_2)^n)$.
\begin{theorem}
    Let $R \cong (\mathbb{F}_2)^n$. The subgraph induced by $R \setminus \{0_R, 1_R\}$ of the graph $\Gamma'_{Reg}(R)$ is isomorphic to the graph $In(R)$.
\end{theorem}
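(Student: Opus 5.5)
Our plan is to write down an explicit bijection $\Phi: R\setminus\{0_R,1_R\} \to V(In(R))$ and then check that adjacency corresponds under it. For $R=(\mathbb{F}_2)^n$ every element is idempotent, so every ideal is principal and generated by an idempotent. We therefore set $\Phi(a)=aR=[a]$. The ideal $[a]$ is nontrivial exactly when $a\neq 0_R$ and $a\neq 1_R$. Indeed, $aR=R$ forces $a$ to be a unit, and the only unit of $(\mathbb{F}_2)^n$ is $1_R$.

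For the bijection, we first identify $a=(a_1,\dots,a_n)$ with its support $S(a)=\{i : a_i=1\}\subseteq\{1,\dots,n\}$. Then
\[
aR=\{x\in R : x_i=0 \text{ for all } i\notin S(a)\}.
\]
This shows $\Phi$ is injective, since distinct supports give distinct ideals. For surjectivity, we will use that every ideal $I$ of a finite product of fields has the form $I_1\times\dots\times I_n$ with each $I_i\in\{0,\mathbb{F}_2\}$. Such an $I$ equals $eR$, where $e$ is the indicator vector of $\{i : I_i=\mathbb{F}_2\}$. The nontrivial ideals are then exactly the images of $R\setminus\{0_R,1_R\}$.

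For adjacency, take distinct $a,b$. By definition of $\Gamma'_{Reg}(R)$, they are adjacent iff $aba=a$ or $bab=b$. Since $a^2=a$ and $R$ is commutative, $aba=ab$, so $aba=a$ iff $ab=a$. Coordinatewise this says $a_i\le b_i$ for all $i$, which is exactly the computation with $\delta$ in the earlier theorem on the idempotent subgraph. In terms of supports it means $S(a)\subseteq S(b)$, that is, $aR\subseteq bR$. As $a\neq b$, the inclusion is proper. Symmetrically, $bab=b$ iff $bR\subsetneq aR$. Hence $a$ and $b$ are adjacent in $\Gamma'_{Reg}(R)$ iff $[a]\subset[b]$ or $[b]\subset[a]$, which is precisely adjacency in $In(R)$.

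Restricting to the induced subgraph on $R\setminus\{0_R,1_R\}$ keeps exactly these edges, so $\Phi$ is a graph isomorphism. The only step needing care is the surjectivity claim, namely that every ideal of $(\mathbb{F}_2)^n$ is a coordinate product of $0$'s and $\mathbb{F}_2$'s. We will handle it by projecting to the coordinates. If $x\in I$ and $x_i=1$, then $xe_i=e_i\in I$, where $e_i$ is the $i$-th standard idempotent. Hence $I$ is generated by the $e_i$ it contains, and $I=eR$ with $e=\sum_{e_i\in I}e_i$. Everything else is a direct translation of the adjacency condition.
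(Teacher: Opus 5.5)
Your proposal is correct and follows essentially the same route as the paper: you use the bijection $a\mapsto [a]$ onto the nontrivial ideals, and you show that $aba=a$ (equivalently $ab=a$) corresponds to $[a]\subset[b]$. The only difference is that you prove the element--ideal correspondence directly via supports instead of citing Goodearl's Theorem 1.1, which makes your argument self-contained.
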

\begin{proof}
     Let $R \cong (\mathbb{F}_2)^n$. Since every element in $R$ is idempotent, using Theorem 1.1 in \cite{goodearl}, it follows that each element of $R$ corresponds bijectively to an ideal of $R$. In this setting, the elements $0_{R}$ and $1_{R}$ correspond to the trivial ideals $\{0_{R}\}$ and $R$, respectively. Consequently, all elements in $R \setminus \{0_R, 1_R\}$ correspond bijectively to all nontrivial ideals of $R$. Thus, we can define a bijective function $\alpha: R \setminus \{0_R, 1_R\} \to X_R$, where $X_R$ denotes the set of all ideals of $R$, by $\alpha(a)=[a]$ for each $a \in R \setminus \{0_R, 1_R\}$. 

For arbitrary $a,b \in R \setminus \{0_R, 1_R\}$. Observe that $ab \in E(\Gamma'_{Reg}(R))$ if and only if $a \neq b \text{ and }( ab=aba=a \text{ or } ba=bab=b)$, then $[a] \subset [b] \text{ or } [b] \subset [a]$.
Conversely, if $[a] \subset [b]$, then $a \neq b$, and $a \in [b]$, so $a= br$ for some $r \in R$. It follows that $aba=b^3r^2=(br)^2=a^2=a$. Similarly, if $[b] \subset [a]$, then $a \neq b$ and $bab=b$. Therefore, $ab \in E(\Gamma'_{Reg}(R))$ if and only if $[a] \subset [b]$ or $[b] \subset [a]$, if and only if $\alpha(a)\alpha(b)=[a][b] \in E(In(R))$. Hence, $\alpha$ is a graph isomorphism, and the subgraph induced by $R \setminus \{0_R, 1_R\}$ of the graph $\Gamma'_{Reg}(R)$ is isomorphic to the graph $In(R)$.
\end{proof}
The previous theorem shows that the subgraph induced by $R \setminus \{0_R, 1_R\}$ in $\Gamma'_{Reg}(R)$ has the same structure as the inclusion ideal graph $In(R)$. It remains to consider the role of the element $1_R$ in $\Gamma'_{Reg}(R)$. By definition, $1_R$ is adjacent to every other vertex in $Reg(R) \setminus \{0_R\}$, which implies that it forms a universal vertex in the reduced graph. Consequently, $\Gamma'_{Reg}(R)$ can be obtained by taking the join of a single vertex with the graph $In(R)$, leading directly to the following corollary.

\begin{corollary}
    Let $R \cong (\mathbb{F}_2)^n$. Then $\Gamma'_{Reg}(R) \cong K_1 + In(R)$.
\end{corollary}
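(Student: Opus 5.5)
The plan is to combine the preceding theorem with the fact that $1_R$ is a universal vertex of $\Gamma'_{Reg}(R)$. First, let $R \cong (\mathbb{F}_2)^n$, and write the vertex set as
$$V(\Gamma'_{Reg}(R)) = \{1_R\} \cup (R\setminus\{0_R,1_R\}).$$
Every $a \in R$ is idempotent, so $a 1_R a = a^2 = a$ for every $a \neq 1_R$. Hence $1_R$ is adjacent to every other vertex. This gives the edge decomposition
$$E(\Gamma'_{Reg}(R)) = E(\langle R\setminus\{0_R,1_R\}\rangle_{\Gamma'_{Reg}(R)}) \cup \{1_R a : a \in R\setminus\{0_R,1_R\}\},$$
in exactly the same way as in the proof of Theorem \ref{teo_strukgrafvn}. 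It follows that $\Gamma'_{Reg}(R) = \langle\{1_R\}\rangle + \langle R\setminus\{0_R,1_R\}\rangle_{\Gamma'_{Reg}(R)}$.

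Next, invoke the previous theorem, which states that the induced subgraph on $R\setminus\{0_R,1_R\}$ is isomorphic to $In(R)$ via $\alpha(a)=[a]$. Extend $\alpha$ by sending $1_R$ to the single vertex of $K_1$. This extension is a bijection onto $V(K_1 + In(R))$. It preserves adjacency: edges at $1_R$ correspond to join edges, and the remaining edges are preserved because $\alpha$ is already an isomorphism on the induced subgraph. This yields $\Gamma'_{Reg}(R)\cong K_1 + In(R)$.

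The only point needing care is the degenerate case $n=1$. There $R\setminus\{0_R,1_R\}=\emptyset$ and $In(R)$ is the empty graph, so one adopts the convention $K_1+\emptyset\cong K_1$, which matches $\Gamma'_{Reg}(\mathbb{F}_2)\cong K_1$. Otherwise I expect no real obstacle, since everything reduces to the preceding theorem and the universality of $1_R$.
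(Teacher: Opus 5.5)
Your proposal is correct and follows the paper's argument. The paper likewise notes that $1_R$ is a universal vertex of $\Gamma'_{Reg}(R)$, since $a1_Ra=a^2=a$ for every $a\in(\mathbb{F}_2)^n$, and combines this with the preceding theorem identifying $\langle R\setminus\{0_R,1_R\}\rangle_{\Gamma'_{Reg}(R)}$ with $In(R)$; your explicit handling of the degenerate case $n=1$, where $In(R)$ has no vertices, is a detail the paper leaves implicit.
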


The results obtained above not only provide a structural description of $\Gamma'_{Reg}(R)$ but also reveal a close connection with other algebraic graphs, particularly the inclusion ideal graph in certain cases. While these findings offer a clearer understanding of the interplay between ring structure and graph properties, several natural questions remain open. These questions point to possible directions for further investigation, especially in extending the current results and exploring additional structural and numerical aspects of the graph.

\begin{openproblem}
The following problems remain open:
\begin{enumerate}
    \item What are the numerical graph parameters of $\Gamma'_{Reg}(R)$ when $R$ is a finite commutative regular ring? In particular, can these parameters be determined using the algorithm developed in this paper?
    \item What is the structure of the graph $\Gamma'_{Reg}(R)$ when $R$ is a finite non-commutative regular ring?
\end{enumerate}
\end{openproblem}

\section{Conclusion}
In this paper, we have investigated the structure and properties of the generalized von Neumann inverse graph $\Gamma_{Reg}(R)$ and its reduced form $\Gamma'_{Reg}(R)$ for finite commutative von Neumann regular rings. By focusing on the reduced graph, the study captures the essential structural behavior while avoiding trivial components. A central aspect of the analysis is the decomposition of such rings into finite direct products of fields, which provides a natural framework for describing the associated graphs.

A main contribution of this work is the development of a constructive approach to determine the structure of $\Gamma'_{Reg}(R)$ for an arbitrary finite commutative von Neumann regular ring. In particular, we present an explicit algorithm that allows the graph to be built systematically from simpler components, starting from the case of fields and extending through iterative constructions involving graph operations. This method shows that the structure of $\Gamma'_{Reg}(R)$ is strongly determined by the decomposition $R \cong F_1 \times F_2 \times \dots \times F_n$, and can be derived from the base graph associated with $(\mathbb{F}_2)^n$.

Various graph-theoretic properties of $\Gamma'_{Reg}(R)$ are characterized in terms of the algebraic structure of $R$. These include conditions for connectivity, acyclicity, and planarity, as well as precise descriptions of vertex degrees, girth, and the existence and number of pendant vertices. Moreover, specific graph classes such as paths, cycles, and wheels are completely classified in this setting. The presence of an induced subgraph isomorphic to the house graph is also shown to depend explicitly on the sizes of the component fields.

In addition to these results, we establish a direct connection between the generalized von Neumann inverse graph and the inclusion ideal graph. In the special case where $R \cong (\mathbb{F}_2)^n$, it is shown that the subgraph induced by $R \setminus \{0_R, 1_R\}$ in $\Gamma'_{Reg}(R)$ is isomorphic to $In(R)$, which leads to the conclusion that $\Gamma'_{Reg}(R) \cong K_1 + In(R)$. This relationship reveals that, in this setting, the structure of $\Gamma'_{Reg}(R)$ can be completely described in terms of the inclusion relations among ideals of the ring.

Overall, the results highlight a clear interplay between ring-theoretic properties and graph-theoretic structures. The graph $\Gamma'_{Reg}(R)$ not only reflects the decomposition of the ring but also encodes detailed algebraic information through its local and global features. The algorithmic framework developed in this work provides a practical tool for constructing and analyzing these graphs, while the connection with inclusion ideal graphs offers further insight and opens the way for future research on algebraic graphs arising from more general classes of rings.

\section*{Acknowledgement}
This research was carried out during the graduate study of the first author at Universitas Gadjah Mada (UGM), Yogyakarta, Indonesia, with support from the Department of Mathematics, UGM, and funding provided by the Ministry of Research, Technology, and Higher Education of the Republic of Indonesia (Kementerian Riset, Teknologi, dan Pendidikan Tinggi) through the PMDSU (Program Magister Menuju Doktor untuk Sarjana Unggul) Scholarship, 2024–2028, under Contract No. 074/C3/DT.05.00/PL-MULTITAHUN LANJUTAN/2026; 1059/UN1/DITLIT/Dit-Lit/PT.01.03/2026.

\end{document}